\documentclass[12pt]{article}
 \usepackage[margin=1in]{geometry} 
\usepackage{amsmath,amsthm,amssymb,amsfonts}
 \usepackage{amssymb}

\numberwithin{equation}{section}

\usepackage{enumitem}
\usepackage[utf8]{inputenc}
\usepackage[pagewise]{lineno}

\usepackage[pagewise]{lineno}

\usepackage{hyperref}
 \usepackage{csquotes}
\usepackage[utf8]{inputenc}
\usepackage[english]{babel}
\usepackage{xcolor}

\providecommand{\keywords}[1]
{
  \small	
  \textbf{\textit{Keywords:}} #1
}

\newcommand{\MSC}[1]{%
  \small
  \textbf{\textit{Mathematics Subject Classification:}} #1
}
\title{Global boundedness in a Chemotaxis-May-Nowak model for virus dynamics with logistic damping}
\author{
    Minh Le\thanks{ Institute for Theoretical Sciences, Westlake University, China \texttt{(leminh@westlake.edu.cn)}} 
}
\date{}

\begin{document}
\maketitle

\begin{abstract}
This paper investigates the following May--Nowak type model for viral infection in a bounded domain $\Omega \subset \mathbb{R}^n$, $n \ge 2$:
\begin{equation*}
\begin{cases}
u_t = \Delta u - \chi \nabla \cdot (u \nabla v) + \kappa - u - uw - \mu \dfrac{u^{1+\alpha}}{\ln^k(u+e)}, \\[1mm]
\gamma v_t = \Delta v - v + uw, \\[1mm]
w_t = \Delta w - w + v,
\end{cases}
\end{equation*}
where $\chi \in \mathbb{R}$, $\mu > 0$, $k \in [0,1)$, $\alpha > 0$, and $\gamma \in \{0,1\}$. 

We establish the global existence and uniform-in-time boundedness of classical solutions for suitably regular initial data under one of the following conditions:
\begin{enumerate}[label=(\Alph*)]
\item $\gamma = 1$, $n = 2$, $k \in [0,1)$, $\alpha = 1$, and $\mu > 0$;
\item $\gamma = 1$, $3 \le n \le 5$, $k = 0$, $\alpha = 1$, and $\mu $  is sufficiently large;
\item $\gamma = 1$, $n \ge 6$, $k = 0$, $\alpha > \frac{n-2}{4}$, and $\mu > 0$;
\item $\gamma = 0$, $n \ge 2$, $k = 0$, $\alpha > \frac{n+2}{2}$, and $\mu>0$.
\end{enumerate}

In particular, in the physically relevant dimensions $n = 2,3$, both subquadratic and quadratic damping are sufficient to prevent blow-up. Moreover, in the fully parabolic case ($\gamma = 1$), our results improve upon recent findings by relaxing the condition $\alpha > \frac{n}{2}$ to weaker assumptions within the above parameter regimes.
\end{abstract}

\keywords{Chemotaxis,  global boundedness, May-Nowak, logistic source}\\
\MSC{35B35, 35K45, 35K55, 92C15, 92C17}

\numberwithin{equation}{section}
\newtheorem{theorem}{Theorem}[section]
\newtheorem{lemma}[theorem]{Lemma}
\newtheorem{remark}{Remark}[section]
\newtheorem{Prop}{Proposition}[section]
\newtheorem{Def}{Definition}[section]
\newtheorem{Corollary}{Corollary}[theorem]
\allowdisplaybreaks

\section{Introduction}

In the past three decades, the mathematical modeling of virus dynamics has received considerable attention, largely motivated by the seminal May–Nowak system proposed in \cite{BMSN}:
\begin{equation*}
    \begin{cases}
        u_t = \kappa -a_1 u- \lambda uw, &\qquad t>0, \\
        v_t = -a_2 v +\lambda uw, &\qquad t>0, \\
        w_t =-a_3 w+ \delta b, &\qquad t>0, 
    \end{cases}
\end{equation*}
where the three interacting variables $u=u(t)$, $v=v(t)$, and $w=w(t)$, respectively represent the densities of healthy uninfected immune cells, infected immune cells, and virus particles at time $t$. Healthy cells are generated by the body at a constant rate $\kappa$, perish at rate $a_1 u$, and become infected through contact with the virus at rate $\lambda uw$. Infected cells arise at rate $\lambda uw$ and are cleared at rate $a_2 v$. Virus particles are released by infected cells at rate $\gamma v$ and decay at rate $a_3 w$. A comprehensive qualitative analysis of the model can be found in \cite{BMSN,NB,May_Nowak}.

In 2013, a model describing the dynamics of HIV within a spatially heterogeneous environment was first proposed in \cite{SAMH} as follows:
\begin{equation} \label{2}
\begin{cases}
    u_t = D_u\Delta u -  \chi \nabla \cdot (u  \nabla v) + \kappa - a_1 u - \lambda uw, \qquad &\text{in } \Omega \times (0,\infty), \\
    v_t = D_v\Delta v - a_2 v + \lambda uw, \qquad &\text{in } \Omega \times (0,\infty), \\
    w_t = D_w\Delta w - a_3 w + \delta v, \qquad &\text{in } \Omega \times (0,\infty), \\
    \frac{\partial u}{\partial \nu} = \frac{\partial v}{\partial \nu} = \frac{\partial w}{\partial \nu} = 0, \qquad &\text{on } \partial \Omega \times (0,\infty), \\
    u(x,0) = u_0(x), \quad v(x,0) = v_0(x), \quad w(x,0) = w_0(x), \qquad &\text{in } \Omega,
\end{cases}    
\end{equation}
where $D_u$, $D_v$, and $D_w$ denote the respective diffusion coefficients, and $\Omega \subset \mathbb{R}^n$ for $n \geq 1$ is a bounded domain with smooth boundary. The chemotaxis term $-\chi \nabla \cdot (u \nabla v)$, with $\chi \in \mathbb{R}$, models the directed movement of target immune cells toward the gradient of cytokines produced at sites of infection.

The inclusion of the chemotactic term renders the analysis of the global dynamics of system \eqref{2} substantially more intricate, owing to the interplay between the nonlinear chemotactic response and the nonlinear infection term $uw$. We briefly survey some pertinent results concerning global solvability, long-time behavior, and blow-up phenomena for \eqref{2}. Global existence and boundedness of solutions to the fully parabolic system  were established in \cite{BPTW} under a smallness condition on $\chi$. Subsequently, Tao and Winkler \cite{Winkler_Tao} demonstrated the occurrence of blow-up for the parabolic--elliptic--parabolic variant in two and three spatial dimensions when $\chi$ is sufficiently large. In order to preclude blow-up, several strategies have been explored. One approach mitigates the chemotactic effect by replacing $\chi$ with $\chi(1+u)^{-\eta}$ for some $\eta>0$ (see \cite{Hu+Lankeit, Winkler_2019+, XLX-2021}); another weakens the nonlinear infection term by substituting $uw$ either with $\frac{uw}{1+u+w}$ (see \cite{Bellomo+Tao}) or with $u^\lambda w$ where $\lambda < \frac{2}{n}$ (see \cite{Mario}). A further strategy consists in enhancing the diffusion mechanism. Indeed, by replacing $\Delta u$ with $\nabla\cdot((u+1)^m \nabla u)$ for a sufficiently large $m$, it was shown in \cite{JM_2025, JY_2025} that system \eqref{2} admits a globally bounded classical solution.

It is well established that logistic damping can effectively suppress blow-up phenomena in the Keller--Segel system and in a variety of related chemotaxis models (see, e.g., \cite{Tello+Winkler, Winkler-logistic, Minh, Tian, Kurt, Kurt+Shen, Le-Kurt} and the references therein). A natural question therefore arises: whether the incorporation of a logistic damping term is sufficient to prevent blow-up in the present context. Motivated by this question, the present paper is devoted to investigating the global existence and boundedness of classical solutions to the following chemotaxis-May--Nowak system supplemented with a logistic source:
\begin{equation} \label{1}
\begin{cases}
    u_t = \Delta u -  \chi \nabla \cdot (u  \nabla v) + \kappa -u-uw- \mu\frac{u^{1+\alpha}}{\ln^k(u+e)}   ,  \qquad &\text{in } \Omega \times (0,\infty), \\
   \gamma v_t =  \Delta v -  v +uw, \qquad &\text{in } \Omega \times (0,\infty), \\
    w_t = \Delta w -w +v , \qquad &\text{in } \Omega \times (0,\infty), \\
    \frac{\partial u}{\partial \nu }=  \frac{\partial v}{\partial \nu }= \frac{\partial w}{\partial \nu } =0 \qquad &\text{on } \partial \Omega \times (0,\infty),\\
    u(x,0)=u_0(x), \quad v(x,0)=v_0(x), \quad w(x,0)=w_0(x) \qquad &\text{in } \Omega.
\end{cases}    
\end{equation}
where $\mu >0$, $\alpha>0$, $k \geq 0 $, $\chi \in \mathbb{R}$ and $\gamma \in \left \{0,1 \right \}$. 

We briefly recall several contributions aligned with this line of investigation. In \cite{JX_2023}, Wang and Si established that, in two spatial dimensions, the system \eqref{1} possesses a global generalized solution for any $\alpha>0$. Subsequently, Li and Zhang \cite{Li_Zhang} demonstrated that the logistic damping term guarantees the global existence and boundedness of classical solutions whenever $\alpha > \frac{n}{2}$ with $n \geq 2$. Furthermore, the long-time asymptotic behavior of solutions was also characterized in \cite{Li_Zhang}.

\textbf{Main results.} In comparison with the classical Keller--Segel system with a logistic source, the condition obtained in \cite{Li_Zhang} is far from optimal. Indeed, it is known that the value $\alpha=1$ plays a critical role in the dichotomy: if $\mu$ is sufficiently small, solutions may undergo finite-time blow-up \cite{Fuest_2021}, whereas if $\mu$ is sufficiently large, solutions exist globally and remain uniformly bounded \cite{Tello+Winkler, KA, Winkler-logistic}. In light of this dichotomy, a fundamental open question remains:
\textit{Does a quadratic logistic damping term prevent blow-up in system \eqref{1}?}
The primary objective of the present work is to address precisely this issue. More specifically, one of our main results establishes that, in two spatial dimensions, a subquadratic logistic damping of the form
\[
-\frac{\mu u^2}{\ln^k(u+e)}, \qquad \mu > 0, \quad k \in [0,1),
\]
is already sufficient to suppress blow-up phenomena. Moreover, in dimensions $n = 3,4,5$---including the physically most relevant case $n=3$---we prove that the quadratic damping term $-\mu u^2$ precludes blow-up, provided $\mu$ is taken sufficiently large.

Before stating our main results, we introduce the following assumptions. Throughout this work, we impose the following hypotheses on the initial data appearing in \eqref{1} and on the reproduction function:
\begin{equation} \label{initial}
        \begin{cases}
            u_0 \in W^{2, \infty}({\Omega}), \qquad u_0 \geq 0 &\text{ in }\Omega,\\
            \gamma v_0 \in W^{2, \infty}(\Omega), \qquad v_0  \geq 0 &\text{ in }\Omega,\\
              w_0 \in W^{2, \infty}(\Omega), \qquad w_0 \geq 0 &\text{ in }\Omega,\\
              \frac{\partial u_0}{\partial \nu }=  \gamma \frac{\partial v_0}{\partial \nu }= \frac{\partial w_0}{\partial \nu } =0 &\text{ on }\partial\Omega, 
        \end{cases}
    \end{equation}
and 
\begin{align} \label{kappa}
   \kappa \in C^1 \left ( \bar{\Omega}\times [0,\infty) \right )\cap L^\infty \left ( \Omega \times (0, \infty) \right ) \quad  \text{is nonnegative and} \quad \left \| \kappa  \right \|_{L^\infty(\Omega \times (0, \infty) )} \leq \kappa_*,
\end{align}
for some $\kappa_*>0$.

Our first main result, which concerns the global existence and boundedness of solutions to the fully parabolic system \eqref{1}, is stated below. 
\begin{theorem} \label{thm}
    Let $\Omega \subset \mathbb{R}^n$ with $n \geq 2$ be a bounded domain with smooth boundary, let $\gamma=1$, $\chi \in \mathbb{R}$, and assume that $\kappa$ satisfies \eqref{kappa}. Suppose that one of the following conditions holds:
    \begin{enumerate}[label=(\Alph*)]
        \item $n=2$, $k\in [0,1)$, $\alpha=1$, and $\mu>0$;
        \item $3 \leq n \leq 5$, $k=0$, $\alpha=1$, and $\mu>\mu_0$ for some $\mu_0>0$;
        \item $n \geq 6$, $k=0$, $\alpha> \frac{n-2}{4}$, and $\mu>0$.
    \end{enumerate}
    Then the system \eqref{1} with initial conditions \eqref{initial} possesses a unique global classical solution $(u,v,w)$ such that 
    \begin{equation*}
        \begin{cases}
            u \in C^0\big( \bar{\Omega}\times [0,\infty) \big) \cap C^{2,1}\big( \bar{\Omega}\times (0,\infty) \big), \\
            v \in C^0\big( \bar{\Omega}\times [0,\infty) \big) \cap C^{2,1}\big( \bar{\Omega}\times (0,\infty) \big), \\
            w \in C^0\big( \bar{\Omega}\times [0,\infty) \big) \cap C^{2,1}\big( \bar{\Omega}\times (0,\infty) \big),
        \end{cases}
    \end{equation*}
    and $u$, $v$, and $w$ are strictly positive in $\bar{\Omega}\times (0,\infty)$. Moreover, the solution is uniformly bounded in the sense that 
    \begin{align*}
        \sup_{t>0} \Big\{ \|u(\cdot,t)\|_{L^\infty(\Omega)} + \|v(\cdot,t)\|_{W^{1,\infty}(\Omega)} + \|w(\cdot,t)\|_{L^\infty(\Omega)} \Big\} < \infty.
    \end{align*}
\end{theorem}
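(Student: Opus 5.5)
The plan has three stages. First, local existence with an extensibility criterion. Second, basic $L^1$-type a priori bounds from combining the equations. Third, an energy argument for a sufficiently high $L^p$ norm of $u$, after which standard bootstrapping gives boundedness.

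\textbf{Local theory and first bounds.} Standard fixed-point arguments together with parabolic regularity give a unique local classical solution on $(0,T_{\max})$. If $T_{\max}<\infty$, then $\|u\|_{L^\infty}$ blows up; positivity follows from the strong maximum principle. Integrating the $u$- and $v$-equations and adding gives
\[
\frac{d}{dt}\int_\Omega (u+v)+\int_\Omega (u+v)+\mu\int_\Omega \frac{u^{1+\alpha}}{\ln^k(u+e)}\le \kappa_*|\Omega|,
\]
because the infection terms cancel. This yields $\sup_t\int_\Omega(u+v)<\infty$ and the space-time bound $\int_t^{t+1}\int_\Omega \frac{u^{1+\alpha}}{\ln^k(u+e)}\le C$. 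Semigroup estimates for the $w$-equation then give $w\in L^\infty_tL^q$ for every $q<\frac{n}{n-2}$. The key structural point is that $uw$ appears with a minus sign in the $u$-equation, so it only helps there. In the $v$-equation, however, $uw$ is a source, and we need $uw$ in a suitable $L^r$ space to control $\nabla v$.

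\textbf{Coupled $L^p$ functional.} For large $p$, I would consider
\[
y(t)=\int_\Omega u^p+\int_\Omega|\nabla v|^{2q}+\int_\Omega w^{r}
\]
or, alternatively, an iterative scheme in which $w$ is bounded in progressively higher $L^r$ spaces. Testing the $u$-equation with $u^{p-1}$ gives
\[
\frac{d}{dt}\int u^p+c\int|\nabla u^{p/2}|^2\le C\chi^2\int u^p|\nabla v|^2-\mu p\int \frac{u^{p+\alpha}}{\ln^k(u+e)}+C.
\]
Using the identity for $|\nabla v|^{2q}$, the source term $uw$ contributes $\int |\nabla v|^{2q-2}|uw|^2$. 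For $w$, the source is $v$, which is one derivative better. I would estimate $\int u^p|\nabla v|^2$ and $\int |\nabla v|^{2q-2}u^2w^2$ with Young's inequality against $\int u^{p+\alpha}$, $\int|\nabla|\nabla v|^q|^2$ and $\int w^{r+\cdot}$, closing with Gagliardo--Nirenberg and the $L^1$ bounds.

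The dimension conditions come out of this exponent bookkeeping. For (C), $\alpha>\frac{n-2}{4}$ is what the bootstrap $u\in L^{1+\alpha}_{t,x}\Rightarrow w\Rightarrow v\Rightarrow u$ requires: $w$ gains two derivatives from $v$, and $v$ gains two from $uw$, which is better than the Keller--Segel case. For (B), with $\alpha=1$, the term $\chi^2\int u^p|\nabla v|^2$ is absorbed only after using the maximal Sobolev regularity bound $\int_0^t e^{s-t}\|\Delta v\|_{L^{p+1}}^{p+1}\le C\int_0^t e^{s-t}\|uw\|_{L^{p+1}}^{p+1}$, which ultimately costs $\mu u^{p+1}$ and forces $\mu>\mu_0$. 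For (A) with $n=2$, I would use $\int u\ln u$ and the Trudinger--Moser inequality. The $L\log L$-type bound from the $L^1$ estimate, together with the space-time control of $\frac{u^2}{\ln^k(u+e)}$ for $k<1$, yields $w,v$ bounds via $\int|\nabla v|^2$ and $\int |\nabla w|^2$. Here Brezis--Gallouët or Trudinger type embeddings are enough precisely because $k<1$.

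\textbf{Main obstacle and conclusion.} The main obstacle is closing the $L^p$ estimate for $u$ in case (B) (and its dimensional analogue in (C)). The difficulty is that the source $uw$ in the $v$-equation is quadratic-type, so $\nabla v$ cannot be controlled by $u$ alone. I would first try maximal regularity in time-weighted form for both the $v$- and $w$-equations: bound $\|w\|_{L^{2(p+1)}}$ by $v$, and $v$ by $uw$, then apply Hölder to get $\|uw\|_{L^{p+1}}\le\|u\|_{L^{2(p+1)}}\|w\|_{L^{2(p+1)}}$. This may need to be preceded by an interpolation step that exploits the $L^1$ bounds, keeping all exponents subcritical when $n\le5$.

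Once $\sup_t\|u\|_{L^p}<\infty$ for some $p>n/2$, the following steps are standard:
\begin{itemize}
\item the $w$-equation gives $w$ bounded in $L^\infty$;
\item the $v$-equation then gives $v$ bounded in $W^{1,\infty}$;
\item Moser iteration or a Neumann heat semigroup argument gives $u$ bounded in $L^\infty$.
\end{itemize}
This contradicts $T_{\max}<\infty$ and gives the uniform bound.
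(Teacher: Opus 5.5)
Your $L^1$ step and the maximal-regularity mechanism for (B) match the paper. However, the proposal misses the step the whole proof hinges on: a bound $\sup_{t}\|w(\cdot,t)\|_{L^\infty(\Omega)}<\infty$ obtained \emph{before} any $L^p$ estimate on $u$. This bound uses nothing more than the $L^1$ bounds and the space--time bound $\sup_t\int_t^{t+\tau}\int_\Omega u^{1+\alpha}<\infty$. In Lemmas \ref{v} and \ref{wL} the paper works with the functional $\int_\Omega v^p+\lambda\int_\Omega w^r$ and splits the source by Young as $v^{p-1}uw\le c\,u^{1+\alpha}+\epsilon\,(v^{p-1}w)^{\frac{1+\alpha}{\alpha}}$. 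The second piece is absorbed by the Gagliardo--Nirenberg dissipation of $v$ and $w$, which uses a previously known bound on $\int_\Omega v^{p_0}$. The first piece is a forcing with bounded time averages, handled by Lemma \ref{ODI} rather than Gronwall. This upgrades $v\in L^\infty_tL^{p_0}$ to $v\in L^\infty_tL^{p}$ for all $p<\frac{4\alpha+2}{n}p_0$. Iterating from $p_0=1$ makes progress exactly when $\alpha>\frac{n-2}{4}$, and once $p>\frac n2$ one gets $w\in L^\infty$. Since the space--time bound is $O(1/\mu)$, this $L^\infty$ bound is independent of $\mu>1$, which keeps the threshold $\mu_0$ in (B) from being circular. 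With $\|w\|_{L^\infty}$ in hand, $uw\le Cu$ and every case becomes a Keller--Segel argument. In (B) this is maximal regularity with $\int_\Omega u^{p+1}w^{p+1}\le\|w\|_{L^\infty}^{p+1}\int_\Omega u^{p+1}$ and $\mu$ large. In (C) it is Young's inequality, since $\alpha>1$. In (A) it is the functional $\int_\Omega u\ln u+\frac12\int_\Omega|\nabla v|^2$.

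The substitutes you offer do not close. The H\"older step $\|uw\|_{L^{p+1}}\le\|u\|_{L^{2(p+1)}}\|w\|_{L^{2(p+1)}}$ needs control of $\|u\|_{L^{2(p+1)}}$, which the damping $\mu\int_\Omega u^{p+1}$ (for $\alpha=1$) does not provide. It also multiplies that norm by a power of $\|w\|_{L^{2(p+1)}}$, which you can only estimate through $uw$ again. The paper escapes this loop by never putting $u$ and $w$ in a common H\"older product; Young's inequality isolates $u^{1+\alpha}$, which is controlled only in space--time. Your fully coupled functional $\int u^p+\int|\nabla v|^{2q}+\int w^r$ comes with no exponent check. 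With $\alpha=1$, the term $\int|\nabla v|^{2q-2}u^2w^2$ can only be paid for with $\int u^{p+1}$, leaving powers of $|\nabla v|$ and $w$ you have not shown to be controllable. In case (A), an $L\log L$ bound does not follow from the $L^1$ estimate. In $\frac{d}{dt}\{\int u\ln u+\frac12\int|\nabla v|^2\}$, the term $-\int uw\Delta v$ can be absorbed only as $\frac12\int|\Delta v|^2+C\|w\|_{L^\infty}^2\int u^2$. Then $k<1$ enters simply because $\frac\mu2\int u^2\ln^{1-k}(u+e)$ dominates $C\int u^2$ for every $\mu>0$, not via Trudinger--Moser or Brezis--Gallou\"et. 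The required $\|w\|_{L^\infty}$ there again comes from the same iteration, run with $\alpha=\frac12$. This is allowed because $\frac{u^2}{\ln^k(u+e)}\ge c\,u^{3/2}-C$ and $\frac12>\frac{n-2}{4}=0$ for $n=2$.
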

\begin{remark}
The present work sharpens the results obtained in \cite{Li_Zhang}, where the authors established boundedness under the condition $\alpha > \frac{n}{2}$ with $n \geq 2$. In addition, our findings reveal that, in two spatial dimensions, even a sub-logistic damping mechanism is sufficient to preclude blow-up. This blow-up suppression effect in the two-dimensional setting is reminiscent of analogous phenomena observed in the Keller--Segel system (see \cite{ Tian4}).
\end{remark}

\begin{remark}
To the best of our knowledge, it remains an open question whether blow-up can occur when $\alpha < 1$, or when $\alpha = 1$ and $\mu$ is sufficiently small. Should this indeed be the case, our result would be optimal in dimensions $n = 2,3,4,5$.
\end{remark}

\begin{remark}
By employing arguments analogous to those in \cite[Theorem 1.2]{Li_Zhang}, one can establish the long-time asymptotic behavior of solutions to system \eqref{1}.
\end{remark}

For the parabolic--elliptic case, the question of whether the inclusion of a logistic source suffices to guarantee global existence and boundedness of solutions remains unresolved. Our second objective is to provide an answer to this question. More precisely, we establish the global well-posedness of the parabolic--elliptic--parabolic system \eqref{1} in the following theorem.
\begin{theorem} \label{thm2}
    Let $\Omega \subset \mathbb{R}^n$ with $n \geq 2$ be a bounded domain with smooth boundary, let $\gamma=0$, $\chi \in \mathbb{R}$, $k=0$, $\alpha>\frac{n+2}{2}$, $\mu>0$, and assume that $\kappa$ satisfies \eqref{kappa}. Then the system \eqref{1} with initial conditions \eqref{initial} possesses a unique global classical solution $(u,v,w)$ such that 
    \begin{equation*}
        \begin{cases}
            u \in C^0\big( \bar{\Omega}\times [0,\infty) \big) \cap C^{2,1}\big( \bar{\Omega}\times (0,\infty) \big), \\
            v \in C^{2,0}\big( \bar{\Omega}\times (0,\infty) \big), \\
            w \in C^0\big( \bar{\Omega}\times [0,\infty) \big) \cap C^{2,1}\big( \bar{\Omega}\times (0,\infty) \big),
        \end{cases}
    \end{equation*}
    and $u$, $v$, and $w$ are strictly positive in $\bar{\Omega}\times (0,\infty)$. Furthermore, the solution is uniformly bounded in the sense that 
    \begin{align*}
        \sup_{t>0} \Big\{ \|u(\cdot,t)\|_{L^\infty(\Omega)} + \|v(\cdot,t)\|_{W^{1,\infty}(\Omega)} + \|w(\cdot,t)\|_{L^\infty(\Omega)} \Big\} < \infty.
    \end{align*}
\end{theorem}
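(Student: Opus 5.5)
Our plan for Theorem \ref{thm2} starts from a standard local existence result, proved by a fixed point argument as in \cite{Li_Zhang}. It gives a unique local classical solution on $(0,T_{\max})$, with the extensibility criterion that $T_{\max}<\infty$ forces $\limsup_{t\to T_{\max}}\|u(\cdot,t)\|_{L^\infty(\Omega)}=\infty$. Positivity then follows from the strong maximum principle, since $\kappa\ge 0$ and the initial data are nonnegative and nontrivial (or after an arbitrarily small time shift). The whole task therefore reduces to a uniform-in-time $L^\infty$ bound for $u$.

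\textbf{Step 1: basic $L^1$ estimates.} Integrate the three equations. Adding the $u$- and $v$-equations cancels the infection term $uw$ and gives
\[
\frac{d}{dt}\int_\Omega u + \int_\Omega u + \int_\Omega v + \mu\int_\Omega u^{1+\alpha} \le \kappa_*|\Omega|
\]
(with $\gamma=0$, $\int v=\int uw$). Hence $\int u$ is uniformly bounded, and the space--time averages of $\int v$ and $\int u^{1+\alpha}$ over unit intervals are bounded. Integrating the $w$-equation yields $\sup_t \int_\Omega w<\infty$.

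\textbf{Step 2: an $L^p$ bound for $u$ with $p$ tied to $\alpha$.} Test the $u$-equation with $u^{p-1}$ and use $\Delta v=v-uw$ in the cross term:
\[
\frac{1}{p}\frac{d}{dt}\int u^p + \frac{4(p-1)}{p^2}\int|\nabla u^{p/2}|^2 + \mu\int u^{p+\alpha}\le -\frac{(p-1)\chi}{p}\int u^p\Delta v + C\int u^{p-1}.
\]
The cross term equals $\frac{(p-1)\chi}{p}\left(\int u^{p+1}w - \int u^p v\right)$, which for $\chi>0$ is bounded by $C\int u^{p+1}w$; for $\chi<0$ the dangerous piece is instead $\int u^pv$, to be treated similarly. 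By Young,
\[
\int u^{p+1}w\le \frac{\mu}{2}\int u^{p+\alpha}+C\int w^{\frac{p+\alpha}{\alpha-1}},
\]
so we need $w\in L^{q}$ uniformly with $q=\frac{p+\alpha}{\alpha-1}$. For the term $\int u^pv$ we use the elliptic estimate $\|v\|_{L^r}\le C\|uw\|_{L^{s}}$, valid for $\frac1r>\frac1s-\frac2n$. Step 3 closes this loop.

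\textbf{Step 3: bootstrap between $u$, $v$ and $w$ (the main obstacle).} The $w$-equation is a heat equation driven by $v$, and $v=(-\Delta+1)^{-1}(uw)$. Semigroup smoothing gains roughly $2$ derivatives from the $w$-equation and $2$ from the elliptic equation, whereas $u$ is a priori only in $L^{1+\alpha}$ in space--time. We therefore plan to combine the $u$ estimate with a coupled functional
\[
y(t)=\int u^p+\int w^{q},\qquad q=\frac{p+\alpha}{\alpha-1}.
\]
Testing the $w$-equation with $w^{q-1}$ produces $\int vw^{q-1}$. Elliptic regularity together with Gagliardo--Nirenberg on $w^{q/2}$ and Young's inequality bound this by $\int u^{p+\alpha}$ and $\int|\nabla w^{q/2}|^2$, both of which can be absorbed. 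The exponent count should close exactly when $\alpha>\frac{n+2}{2}$: the loss of $n/2$ in the embeddings plus one extra power from the factor $w$ in $uw$ must be dominated by the damping exponent. We expect this careful choice of $p,q$ so that all Young exponents are admissible to be the delicate part. If it fails directly, our fallback is to first bound $w$ in $L^\infty$ via $\|v\|_{L^\infty_tL^{r}}$ with $r>n/2$ obtained from the $L^{1+\alpha}$ damping. This fallback is itself an iterative bootstrap.

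\textbf{Step 4: conclusion.} Once $\sup_t\|u\|_{L^p}$ with $p>n$ and $\|w\|_{L^\infty}$ are bounded, elliptic regularity gives $\|v\|_{W^{1,\infty}}\le C$. A Moser--Alikakos iteration, or the Neumann heat semigroup applied to the $u$-equation, then gives $\|u\|_{L^\infty}\le C$. Together with the extensibility criterion this yields $T_{\max}=\infty$ and the stated uniform bound.
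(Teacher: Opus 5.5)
Your plan is in all essentials the paper's argument (Lemmas~\ref{L1'}, \ref{w}, \ref{Lp'} and the proof of Theorem~\ref{thm2}). The shared steps are:
\begin{itemize}
\item an $L^1$ bound for $w$ via $\int_\Omega v=\int_\Omega uw$;
\item a coupled functional $\frac1p\int_\Omega u^p+\lambda\int_\Omega w^q$, in which Gagliardo--Nirenberg together with $\sup_t\int_\Omega w<\infty$ upgrades the $w$-diffusion to control of $\int_\Omega w^{q+\frac2n}$;
\item control of the $v$-term through the elliptic equation, with absorption into $\mu\int_\Omega u^{p+\alpha}$;
\item $W^{2,p}$ regularity for $v$, comparison for $w$, and Moser iteration.
\end{itemize}
What is missing is the one computation that actually uses $\alpha>\frac{n+2}{2}$. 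You leave Step 3 as ``should close'' with a fallback, so the proof is incomplete precisely there.

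The check is short and needs no $W^{2,s}$ theory. Testing $0=\Delta v-v+uw$ with $v^{r-1}$ gives $\int_\Omega v^r\le\int_\Omega uwv^{r-1}$. Young with exponents $p+\alpha$, $\frac{r}{r-1}$, $m$ (absorbing half of $\int_\Omega v^r$) then gives
\[
\int_\Omega v^r\le \eta\int_\Omega u^{p+\alpha}+C_\eta\int_\Omega w^{m},\qquad m=\frac{r(p+\alpha)}{p+\alpha-r}.
\]
With the paper's choice $q=\frac{p+\alpha}{\alpha-1}-\frac2n$:
\begin{itemize}
\item the gain term is exactly the $\int_\Omega w^{\frac{p+\alpha}{\alpha-1}}$ produced by the $u$-equation;
\item Young reduces $\int_\Omega vw^{q-1}$ to $\int_\Omega v^r$ with $r=\frac{n(p+\alpha)}{(n+2)(\alpha-1)}$;
\item $m<\frac{p+\alpha}{\alpha-1}$ is equivalent to $r\alpha<p+\alpha$, i.e.\ to $\alpha>\frac{n+2}{2}$, for every $p$.
\end{itemize}
With your choice $q=\frac{p+\alpha}{\alpha-1}$:
\begin{itemize}
\item the $u$-induced term $\int_\Omega w^q$ is strictly subcritical, and $r=\frac{nq+2}{n+2}$;
\item the requirement $m<q+\frac2n$ becomes $(p+\alpha)(2\alpha-2-n)>2(\alpha-1)$;
\item this holds for all large $p$ when $\alpha>\frac{n+2}{2}$, which suffices since $p$ is arbitrary.
\end{itemize}

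For $\chi<0$, Young on $\int_\Omega u^pv$ yields $\int_\Omega v^l$ with $l=\frac{p+\alpha}{\alpha}$. The same testing then leaves $\int_\Omega w^{\frac{l(p+\alpha)}{p+\alpha-l}}=\int_\Omega w^{\frac{p+\alpha}{\alpha-1}}$, the same exponent as for $\chi>0$. So one functional does cover both signs, as you anticipated.

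Your fallback is unnecessary, and as phrased it is not available. With $\gamma=0$, $v(\cdot,t)$ is determined by $u(\cdot,t)w(\cdot,t)$, while the damping only controls time averages of $\int_\Omega u^{1+\alpha}$. So it gives no uniform-in-time $L^r$ bound on $v$.

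Finally, make the order in Step 4 explicit:
\begin{itemize}
\item bounds on $\int_\Omega u^p$ and $\int_\Omega w^p$ for all $p$ give $uw\in L^p$ with $p>n$;
\item hence $v\in W^{2,p}\hookrightarrow W^{1,\infty}$;
\item only then does comparison in the $w$-equation give $\|w\|_{L^\infty}$.
\end{itemize}
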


\begin{remark}
Due to the lack of a uniform-in-time $L^1$ bound for $v$, we obtain a larger threshold for $\alpha$ compared to in the fully parabolic case ($\gamma=1$).
\end{remark}
\textbf{Challenges and main ideas.} The principal difficulty lies in proving that the quadratic logistic damping source is capable of preventing blow-up in dimensions less than six. Indeed, compared to the classical Keller--Segel system, system \eqref{1} is considerably more intricate to analyze due to the presence of the nonlinear signal production term $uw$. Specifically, the nonlinear contribution arising from the chemotactic term behaves asymptotically like $\chi u^2 w$, whereas the logistic damping provides a counteracting term of the form $-\mu u^2$. Consequently, it appears that the sole viable route to establishing boundedness is to first show that $w$ remains uniformly bounded in time. Pursuing this line of reasoning, we prove in Lemmas \ref{v} and \ref{wL} that if
\begin{align*}
    \sup_{t \in (0,T_{\rm max}- \tau)} \int_t^{t+\tau} \int_\Omega u^{1+\alpha}(\cdot,s) \, ds < \infty,
\end{align*}
where $\tau = \min\{1, \frac{T_{\rm max}}{2}\}$ and $T_{\rm max} \in (0,\infty]$ is defined in Lemma \ref{local}, and $\alpha > \frac{n-2}{4}$, then $w$ is indeed uniformly bounded in time. The fact that quadratic damping precludes blow-up only in dimensions $n < 6$ is a direct consequence of the inequality $\frac{n-2}{4} < 1$, which holds precisely when $n < 6$. Under the established boundedness of $w$, system \eqref{1} behaves essentially like a standard Keller--Segel system, and therefore the quadratic logistic damping is sufficient to guarantee the boundedness of solutions.

\textbf{Plan of the paper.} In Section \ref{S2}, we establish the local existence of solutions to system \eqref{1}, together with several auxiliary inequalities that will be employed in the subsequent analysis. Section \ref{S3} is devoted to deriving a collection of \emph{a priori} estimates for solutions, which play a pivotal role in the proofs of the main results. In Section \ref{S4}, we obtain uniform $L^p$ bounds for solutions to the fully parabolic case ($\gamma = 1$) and complete the proof of Theorem \ref{thm}. Finally, in Section \ref{S5}, we establish uniform-in-time $L^p$ boundedness for solutions to the parabolic--elliptic--parabolic system, which is then employed to prove Theorem \ref{thm2}.
\section{Preliminaries} \label{S2}
In this section, we establish the local existence of solutions to system \eqref{1}, and collect several auxiliary results, including $L^p$ estimates for the heat equation and a number of useful inequalities. We begin with the local well-posedness result. Since its proof follows from standard arguments based on well-established techniques in \cite{DM} and \cite{Winkler-logistic}, we omit the details for the sake of brevity.
\begin{lemma} \label{local}
    Let $\Omega \subset \mathbb{R}^n$ with $n \geq 1$ be a bounded domain with smooth boundary, let $ \chi \in \mathbb{R}$, $\gamma \in \left \{ 0,1\right \}$, $\mu>0$, $\alpha>0$ and $k \geq 0$, and assume that $\kappa$ satisfies \eqref{kappa}. Then for any initial data $(u_0, \gamma v_0,w_0)$ satisfies \eqref{initial}, there exists $T_{\rm max}\in (0, \infty]$ and a uniquely determined functions 
    \begin{equation*}
        \begin{cases}
            u \in C^0\left ( \bar{\Omega}\times [0,T_{\rm max}) \right )\cap C^{2,1}\left ( \bar{\Omega}\times (0,T_{\rm max}) \right ), \\
            w \in C^0\left ( \bar{\Omega}\times [0,T_{\rm max}) \right )\cap C^{2,1}\left ( \bar{\Omega}\times (0,T_{\rm max}) \right ), 
        \end{cases}
    \end{equation*}
    and 
    \begin{equation*}
        \begin{cases}
            v \in C^0\left ( \bar{\Omega}\times [0,T_{\rm max}) \right )\cap C^{2,1}\left ( \bar{\Omega}\times (0,T_{\rm max}) \right ) \qquad &\text{if } \gamma=1,\\
            v \in C^{2,0}\left ( \bar{\Omega}\times (0,T_{\rm max}) \right ) \qquad &\text{if } \gamma=0,
        \end{cases}
    \end{equation*}
    such that $(u,v,w)$ solves \eqref{1} classically in $\bar{\Omega}\times [0,T_{\rm max})$. Moreover, $u>0$, $v>0$ and $w>0$ in $\bar{\Omega}\times (0,T_{\rm max})$ and 
    \begin{align} \label{local-1}
       \text{if } T_{\rm max}< \infty \text{ then } \limsup_{t \to T_{\rm max}} \left \{ \left \|u(\cdot,t) \right \|_{L^\infty(\Omega)}+  \gamma \left \|v(\cdot,t) \right \|_{W^{1,\infty}(\Omega)}+ \left \| w(\cdot,t) \right \|_{L^\infty(\Omega)}  \right \} = \infty.
    \end{align}
\end{lemma}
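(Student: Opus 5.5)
We prove Lemma \ref{local} by the standard Banach fixed point approach, as in \cite{DM,Winkler-logistic}, working first with the $u$--$w$ pair and treating $v$ as determined by them. Fix $R>\|u_0\|_{L^\infty(\Omega)}+\|w_0\|_{L^\infty(\Omega)}+\gamma\|v_0\|_{W^{1,\infty}(\Omega)}+1$. For small $T\in(0,1)$ we work in the closed set $S_T$ of pairs $(\bar u,\bar w)\in C^0(\bar\Omega\times[0,T])^2$ with sup-norms at most $R$.

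Given $(\bar u,\bar w)\in S_T$, we define $v$ in one of two ways:
\begin{enumerate}[label=(\roman*)]
\item if $\gamma=1$, $v$ solves the Neumann heat problem $v_t=\Delta v-v+\bar u\bar w$ with $v(\cdot,0)=v_0$;
\item if $\gamma=0$, $v=(I-\Delta)^{-1}(\bar u\bar w)$ for each $t$.
\end{enumerate}
Since $\bar u\bar w\in L^\infty$, maximal Sobolev regularity gives $\|v(\cdot,t)\|_{W^{1,\infty}}\le C(R)$ on $[0,T]$. In case (i) we use Neumann heat semigroup smoothing $\|\nabla e^{t\Delta}f\|_{L^\infty}\le C(1+t^{-\frac12-\frac{n}{2q}})\|f\|_{L^q}$ with $q>n$. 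In case (ii) we use elliptic $W^{2,q}$ estimates with $q>n$.

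Next we define $\Phi(\bar u,\bar w)=(U,W)$ through the variation-of-constants formulas
\[
U(t)=e^{t(\Delta-1)}u_0-\chi\int_0^t e^{(t-s)(\Delta-1)}\nabla\cdot(\bar u\nabla v)\,ds+\int_0^t e^{(t-s)(\Delta-1)}\Big(\kappa-\bar u\bar w-\mu\tfrac{|\bar u|^{1+\alpha}}{\ln^k(|\bar u|+e)}\Big)ds,
\]
\[
W(t)=e^{t(\Delta-1)}w_0+\int_0^t e^{(t-s)(\Delta-1)}v\,ds .
\]
The divergence term is controlled by the estimate $\|e^{t\Delta}\nabla\cdot \phi\|_{L^\infty}\le C(1+t^{-\frac12-\frac{n}{2q}})\|\phi\|_{L^q}$. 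This estimate, together with the local Lipschitz continuity of $s\mapsto |s|^{1+\alpha}\ln^{-k}(|s|+e)$ on $[-R,R]$, shows that $\Phi$ maps $S_T$ into itself and is a contraction when $T$ is small. The resulting fixed point is a mild solution.

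\textbf{Main technical point.} The step needing care is the Lipschitz estimate for $v$, i.e. bounding $\|\nabla v_1-\nabla v_2\|_{L^\infty}$ by the difference of the products $\bar u_1\bar w_1-\bar u_2\bar w_2$. For $\gamma=0$ this follows linearly from elliptic regularity; for $\gamma=1$ it follows from the same smoothing estimate with a factor $T^{\frac12-\frac{n}{2q}}$. When $0<\alpha<1$ the source is only Hölder continuous at $u=0$. We would handle this by replacing it with the Lipschitz truncation $\mu (u_+)^{1+\alpha}$, which is fine since $1+\alpha>1$, so the map is $C^1$ on $[-R,R]$.

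Regularity and the remaining properties follow by standard arguments:
\begin{enumerate}[label=(\alph*)]
\item \textbf{Classical regularity.} Parabolic Schauder theory, bootstrapped from Hölder continuity of the mild solution (using the $W^{2,\infty}$ data and the compatibility conditions in \eqref{initial}), gives the stated $C^{2,1}$ regularity. For $\gamma=0$ the same argument gives $C^{2,0}$ regularity of $v$; this uses $\kappa\in C^1$.
\item \textbf{Positivity.} Nonnegativity follows from the comparison principle, since $0$ is a subsolution for each equation. For $u$ the reaction terms vanish at $u=0$ except $\kappa\ge0$. Then $v\ge0$ because $uw\ge0$, and $w\ge0$ because $v\ge0$. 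Strict positivity for $t>0$ follows from the strong maximum principle, applied in a nondegenerate way as long as the data are not identically zero; otherwise we argue via $\kappa$ or via the coupling. We would remark on these degenerate cases rather than dwell on them.
\item \textbf{Uniqueness.} This follows from the same contraction (Gronwall-type) estimate applied to two solutions.
\item \textbf{Extensibility criterion \eqref{local-1}.} The existence time $T$ depends only on $R$. Hence if the quantity in \eqref{local-1} stayed bounded near a finite $T_{\rm max}$, the solution could be restarted and extended past $T_{\rm max}$, a contradiction. For $\gamma=0$, the $W^{1,\infty}$ norm of $v$ is automatically controlled by $\|u\|_{L^\infty}\|w\|_{L^\infty}$, which is why only $\gamma\|v\|_{W^{1,\infty}}$ appears in \eqref{local-1}.
\end{enumerate}
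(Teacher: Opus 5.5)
Your construction (a contraction for $(u,w)$ with $v$ determined by $\bar u\bar w$ through the heat or resolvent equation, Schauder bootstrapping, uniqueness from the same Lipschitz estimates, and the restart argument for \eqref{local-1}) is the standard argument the paper cites without giving details. These parts are sound. The step that cannot be completed is strict positivity in the degenerate cases. There the assertion is false, so no argument ``via $\kappa$ or via the coupling'' can rescue it.

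\textbf{Counterexamples.} If $\kappa\equiv0$ and $u_0\equiv0$, then $u\equiv0$, together with the solutions of the linear problems $\gamma v_t=\Delta v-v$ and $w_t=\Delta w-w+v$, is a classical solution. By uniqueness it is \emph{the} solution, so $u>0$ fails. If $w_0\equiv0$ and $\gamma v_0\equiv0$, then $(\tilde u,0,0)$ is the solution, where $\tilde u$ solves the scalar Neumann problem $u_t=\Delta u+\kappa-u-\mu u^{1+\alpha}\ln^{-k}(u+e)$. So $v>0$ and $w>0$ both fail; for $\gamma=0$ the condition reduces to $w_0\equiv0$.

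\textbf{Repair.} You must either add a nondegeneracy hypothesis or claim only nonnegativity. A sufficient hypothesis is $u_0\not\equiv0$ and $w_0\not\equiv0$. Then the strong maximum principle with Hopf's lemma gives $u>0$ and $w>0$ on $\bar\Omega\times(0,T_{\rm max})$, hence $uw>0$ there. From this, $v>0$ follows via the Duhamel formula if $\gamma=1$, and via positivity of $(I-\Delta)^{-1}$ if $\gamma=0$. The defect is in the lemma itself, which asserts positivity for all data satisfying \eqref{initial}, so the paper's deferred proof has the same problem.

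\textbf{Two smaller repairs.}
\begin{enumerate}
\item Your nonnegativity argument is circular as written: $v\ge0$ is derived from $uw\ge0$, which needs $w\ge0$, which you derive from $v\ge0$. Once $u\ge0$ is known, treat $(v,w)$ jointly as a cooperative linear system. For $\gamma=1$, use a Gronwall estimate for $\int_\Omega v_-^2+\int_\Omega w_-^2$. For $\gamma=0$, use one for $\int_\Omega w_-^2$, relying on the fact that $(I-\Delta)^{-1}$ is order preserving and $L^2$-contractive.
\item The map $s\mapsto|s|^{1+\alpha}\ln^{-k}(|s|+e)$ is already $C^1$ on $\mathbb{R}$ for every $\alpha>0$, since its derivative is continuous and vanishes at $0$. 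Only the derivative is merely H\"older when $\alpha<1$. So no truncation is needed for the contraction. Comparison with the subsolution $0$ works because this term is locally Lipschitz and the full reaction equals $\kappa\ge0$ at $u=0$.
\end{enumerate}
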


Henceforth, we denote by $(u,v,w)$ a solution of \eqref{1} in $\Omega \times (0,T_{\rm max})$ as provided by Lemma \ref{local}. We next recall a standard $L^p$ regularity result for parabolic equations, as established in \cite[Lemma 4.1]{DM}.
\begin{lemma} \label{C52.Para-Reg}
Let $\Omega \subset \mathbb{R}^n$ with $n \geq 2$ be a bounded domain with smooth boundary. Suppose that $p\geq 1$, $j \in \left \{1,2 \right \}$ and  
\begin{equation*}
    \begin{cases}
     q_j &\in \left [1, \frac{np}{n-jp} \right ),  \qquad \text{when } p< \frac{n}{j},\\
     q_j &\in \left [1, \infty \right )\qquad \text{when } p= \frac{n}{j},\\
     q_j & \in  \left [1, \infty \right ], \qquad \text{when } p>\frac{n}{j},
     \end{cases}
\end{equation*}
Assuming $V_0 \in W^{2,\infty}(\Omega)$ satisfying $\frac{\partial V_0}{\partial \nu}=0$ and $V$ is a classical solution to the following system
\begin{equation}\label{C52.parabolic-equation}
    \begin{cases}
     V_t = \Delta V  - a V + f &\text{in } \Omega \times (0,T), \\ 
\frac{\partial V}{\partial \nu} =  0 & \text{on }\partial \Omega \times (0,T),\\ 
 V(\cdot,0)=V_0   & \text{in } \Omega,
    \end{cases}
\end{equation}
where $f \in C(\Omega \times (0,T))$, $a>0$ and $T\in (0,\infty]$. If there exists $M>0$ satisfying 
\begin{align*}
    \left \| f(\cdot, t) \right \|_{L^p(\Omega)} \leq M \qquad \text{for all }t\in (0,T),
\end{align*}
then there exists $C_j>0$ depending on $p,q_j$ and $M$ such that 
\begin{align*}
    \left \| \nabla V(\cdot,t) \right \|_{q_1} \leq C_1, \qquad   \left \|  V(\cdot,t) \right \|_{q_2} \leq C_2 \qquad \text{for all }t\in (0,T_{\rm max}).
\end{align*}

\end{lemma}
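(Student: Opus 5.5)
The plan is to represent $V$ through the Neumann heat semigroup and use the standard smoothing estimates. By variation of constants,
\begin{align*}
V(\cdot,t)=e^{t(\Delta-a)}V_0+\int_0^t e^{(t-s)(\Delta-a)} f(\cdot,s)\,ds ,\qquad t\in(0,T),
\end{align*}
where $(e^{\tau\Delta})_{\tau\ge0}$ is the Neumann heat semigroup on $\Omega$. I will bound the two terms separately and uniformly in $t$.

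\textbf{Initial-data term.} Since $V_0\in W^{2,\infty}(\Omega)$ with $\frac{\partial V_0}{\partial\nu}=0$, the maximum principle gives $\|e^{t(\Delta-a)}V_0\|_{L^\infty}\le \|V_0\|_{L^\infty}$. The gradient estimate $\|\nabla e^{\tau\Delta}\varphi\|_{L^\infty}\le C\|\nabla\varphi\|_{L^\infty}$ gives $\|\nabla e^{t(\Delta-a)}V_0\|_{L^\infty}\le C\|\nabla V_0\|_{L^\infty}$. Because $\Omega$ is bounded, both bounds carry over to every $L^{q_j}$ norm.

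\textbf{Forcing term.} Here I use the smoothing estimates for the Neumann semigroup (Winkler's $L^p$–$L^q$ estimates):
\begin{align*}
\|e^{\tau\Delta}\varphi\|_{L^{q}}\le C\bigl(1+\tau^{-\frac n2(\frac1p-\frac1q)}\bigr)\|\varphi\|_{L^p}
\end{align*}
for $\varphi$ with $\int_\Omega\varphi=0$, plus a trivial bound for the mean, and
\begin{align*}
\|\nabla e^{\tau\Delta}\varphi\|_{L^{q}}\le C\bigl(1+\tau^{-\frac12-\frac n2(\frac1p-\frac1q)}\bigr)e^{-\lambda_1\tau}\|\varphi\|_{L^p}.
\end{align*}
Combined with the factor $e^{-a(t-s)}$ and the hypothesis $\|f(\cdot,s)\|_{L^p}\le M$, these yield
\begin{align*}
\|V(\cdot,t)\|_{L^{q_2}}&\le C+CM\int_0^\infty \bigl(1+\sigma^{-\frac n2(\frac1p-\frac1{q_2})}\bigr)e^{-a\sigma}\,d\sigma,\\
\|\nabla V(\cdot,t)\|_{L^{q_1}}&\le C+CM\int_0^\infty \bigl(1+\sigma^{-\frac12-\frac n2(\frac1p-\frac1{q_1})}\bigr)e^{-a\sigma}\,d\sigma .
\end{align*}
The factor $e^{-a\sigma}$ with $a>0$ handles large times, so the bound is uniform in $t\in(0,T)$ even when $T=\infty$. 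Local integrability at $\sigma=0$ needs $\frac j2+\frac n2(\frac1p-\frac1{q_j})<j$ in the form relevant to each estimate. For $j=2$ (the $V$ bound) this means $\frac n2(\frac1p-\frac1q)<1$, i.e.\ $q<\frac{np}{n-2p}$ when $p<\frac n2$. For $j=1$ (the gradient bound) it means $\frac12+\frac n2(\frac1p-\frac1q)<1$, i.e.\ $q<\frac{np}{n-p}$ when $p<n$.

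These are exactly the stated ranges $q_j\in[1,\frac{np}{n-jp})$. If $p=\frac nj$, every finite $q$ is admissible. If $p>\frac nj$, then $q=\infty$ is also admissible because the exponent $\frac n{2p}\cdot j$-type quantity stays below one.

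\textbf{Main obstacle.} The delicate point is the endpoint $q=\infty$ for $p>\frac nj$ and $p=1$: for $p=1$ the strict inequality is exactly what keeps the singular exponent below $1$. The endpoint cases need care with the semigroup constants, and one should note that the semigroup estimates are valid for $1\le p\le q\le\infty$. For $q<p$, Hölder's inequality on the bounded domain reduces the claim to $q=p$. The constants then depend only on $p$, $q_j$, $M$, $a$, $V_0$ and $\Omega$, which gives the uniform bounds $C_1,C_2$.
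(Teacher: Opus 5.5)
Your proof is correct, and it follows the route the paper relies on. The paper gives no proof of its own and only cites Lemma 4.1 of its reference DM, whose standard proof is the same argument you give: the Duhamel formula, Neumann-semigroup $L^p$--$L^q$ and gradient smoothing estimates, the factor $e^{-a\sigma}$ for uniformity in $t$, and the ranges of $q_j$ arising exactly as the conditions for local integrability at $\sigma=0$.

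Two points are only cosmetic. Your endpoint worry about $p=1$ together with $p>\frac nj$ is vacuous, since $n\ge 2$ forces $\frac nj\ge 1$. And, as you note, the constants also depend on $a$, $V_0$ and $\Omega$, with the time interval in the conclusion properly being $(0,T)$.
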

The following lemma provides a key inequality for comparing \( u \) and \( \Delta v \). For a detailed proof, we refer interested readers to \cite{WMS}[Lemma 2.3].
\begin{lemma} \label{l1}
    Assuming that $\Omega \subset \mathbb{R}^n$ with $n \geq 1$,  $ p \in (n, \infty)$, and $T\in (0,\infty]$. Then there exists $C= C(n,p,\Omega)>0$ such that for any $t \in (t_0, T)$ with $t_0:= \min \left \{ 1, \frac{T}{2}\right \}$, the following holds
\begin{align}
    \int_{t_0}^t  e^{\frac{ps}{2}} \int_\Omega |\Delta g|^p\,dx  \, ds \leq C \left ( \int_{t_0}^t e^{\frac{ps}{2}} \int_\Omega |f|^p \,dx \, ds + e^{\frac{pt_0}{2}} \left \| \Delta g(\cdot, t_0) \right \|^p_{L^p(\Omega)}\right ),
\end{align}
for any $f \in L^p \left (  \Omega \times (0,T) \right ) $ and $g$ is a classical solution to the following system with initial condition $g_0 \in W^{2, p}(\Omega)$:
\begin{equation}
    \begin{cases}
     g_t = \Delta g  -  g + f &\text{in } \Omega \times (0,T), \\ 
\frac{\partial g}{\partial \nu} =  0 & \text{on }\partial \Omega \times (0,T),\\ 
 g(\cdot,0)=g_0   & \text{in } \Omega.
    \end{cases}
\end{equation}
\end{lemma}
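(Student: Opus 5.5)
The plan is to prove Lemma \ref{l1} by maximal Sobolev regularity applied to a rescaled version of $g$. Write $t_0=\min\{1,\frac{T}{2}\}$ and set
\[
h(x,s):=e^{\frac{s}{2}}\,g(x,s), \qquad s\in(t_0,T).
\]
A direct computation gives
\[
h_t=\Delta h-\tfrac12 h+e^{\frac s2}f \quad \text{in } \Omega\times(t_0,T),
\]
with homogeneous Neumann boundary conditions and initial value $h(\cdot,t_0)=e^{\frac{t_0}{2}}g(\cdot,t_0)$. The advantage of this form is that the damping coefficient $\tfrac12>0$ makes the operator $-\Delta+\tfrac12$ with Neumann data sectorial with spectrum in $[\tfrac12,\infty)$. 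Hence it admits maximal $L^p$-regularity on the half-line, and the constant does not depend on the length $t-t_0$ of the time interval.

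To handle the initial value, I would split $h=h_1+h_2$:
\begin{itemize}
\item $h_1$ solves the same equation with zero initial data and forcing $e^{\frac s2}f$;
\item $h_2$ solves the homogeneous equation with initial value $h(\cdot,t_0)$.
\end{itemize}
For $h_1$, the maximal regularity estimate (e.g.\ Hieber--Prüss, or Giga--Sohr in the Neumann setting) gives
\[
\int_{t_0}^t\|\Delta h_1\|_{L^p}^p\,ds\le C\int_{t_0}^t e^{\frac{ps}{2}}\|f\|_{L^p}^p\,ds ,
\]
with $C=C(n,p,\Omega)$.

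For $h_2$ we have $h_2(s)=e^{(s-t_0)(\Delta-\frac12)}h(t_0)$. Since $\Delta$ commutes with the semigroup,
\[
\Delta h_2(s)=e^{(s-t_0)(\Delta-\frac12)}\Delta h(t_0).
\]
Because the Neumann heat semigroup is a contraction on $L^p$, this yields
\[
\|\Delta h_2(s)\|_{L^p}\le e^{-\frac{s-t_0}{2}}\,e^{\frac{t_0}{2}}\,\|\Delta g(\cdot,t_0)\|_{L^p}.
\]
Integrating in time then gives
\[
\int_{t_0}^t\|\Delta h_2\|_{L^p}^p\,ds\le \tfrac{2}{p}\,e^{\frac{pt_0}{2}}\|\Delta g(\cdot,t_0)\|_{L^p}^p .
\]
One point needs care here: $g(\cdot,t_0)$ must lie in $W^{2,p}$ with $\partial_\nu g=0$. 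This follows from parabolic regularity for $t_0>0$; if $t_0$ were $0$ it would come from the assumption $g_0\in W^{2,p}$.

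To conclude, I would combine the two pieces using $|a+b|^p\le 2^{p-1}(|a|^p+|b|^p)$ and note that $e^{\frac{ps}{2}}|\Delta g|^p=|\Delta h|^p$. This gives the claimed inequality. I expect the main obstacle to be ensuring that the maximal regularity constant is uniform in $t$, including the case $T=\infty$. The shift by $\tfrac12$ supplies exactly this: the resolvent is bounded on the right half-plane, so the maximal regularity estimate holds on $(t_0,\infty)$. The restriction $p>n$ is not essential to the argument; I would simply carry it over from the statement.
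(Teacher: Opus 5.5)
Your proof is correct. Rescaling by $h=e^{s/2}g$ and applying maximal $L^p$-regularity of the invertible Neumann operator $-\Delta+\frac12$ on $(t_0,\infty)$ controls the forced part with a constant independent of $t$ and $T$. The $L^p$-contractivity of the Neumann heat semigroup applied to $\Delta h(\cdot,t_0)$ then gives exactly the initial term. This step is legitimate because $t_0=\min\{1,\frac{T}{2}\}>0$ always, so $g(\cdot,t_0)\in C^2(\bar\Omega)$ with $\partial_\nu g(\cdot,t_0)=0$, and the hypothesis on $g_0$ is never needed. The one step you leave implicit, $h=h_1+h_2$, follows from uniqueness of strong solutions, since $g\in C^{2,1}(\bar\Omega\times[t_0,t])$.

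The paper gives no proof of its own and simply imports the estimate from [WMS, Lemma 2.3]. Exponential rescaling combined with Hieber--Pr\"uss type maximal Sobolev regularity is the standard route to such weighted estimates, so your proposal is essentially the argument the paper relies on.
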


We next present a generalized form of the Gagliardo--Nirenberg interpolation inequality, whose proof can be found in \cite{LiLankeit2016}[Lemma 2.3].
\begin{lemma} \label{GN}
Let $\Omega$ be a  bounded and smooth domain of $\mathbb{R}^n$ with $n \geq 1$. Let $r \geq 1$, $0< q\leq p < \infty$, $s>0$ such that 
\begin{align*}
    \frac{1}{r} \leq \frac{1}{n}+\frac{1}{p}.
\end{align*} Then there exists a constant $C_{GN}>0$ such that 
\begin{equation*}
    \left \| f \right \|^p_{L^p(\Omega)}\leq C_{GN}\left ( \left \| \nabla f \right \|_{L^r(\Omega)}^{pa}\left \| f \right \|^{p(1-a)}_{L^q(\Omega)} +\left \| f \right \|^p_{L^s(\Omega)} \right )\qquad \text{for all } f \in W^{1,r}(\Omega)\cap L^q(\Omega),
\end{equation*}
where $a= \frac{\frac{1}{q}-\frac{1}{p}}{\frac{1}{q}+\frac{1}{n}-\frac{1}{r}} \in [0,1]$.
\end{lemma}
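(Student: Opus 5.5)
The statement to address is Lemma~\ref{GN}, the generalized Gagliardo--Nirenberg inequality permitting $q<1$. I would reduce it to the classical Gagliardo--Nirenberg inequality on bounded smooth domains and then handle small exponents by interpolation and absorption.

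\emph{Step 1: the classical case.} Assume first that $q\ge 1$. The classical inequality on a bounded Lipschitz domain reads
\begin{equation*}
\|f\|_{L^p}\le C\|\nabla f\|_{L^r}^{a}\|f\|_{L^q}^{1-a}+C\|f\|_{L^q},
\end{equation*}
with $a$ as in the statement. The hypothesis $\frac1r\le\frac1n+\frac1p$ guarantees $a\le 1$, and $q\le p$ guarantees $a\ge 0$. Raising to the power $p$ gives the main term. It remains to replace the lower-order term $\|f\|_{L^q}$ by $\|f\|_{L^s}$ for arbitrary $s>0$.
\begin{itemize}
\item If $s\ge q$, Hölder on the bounded domain gives $\|f\|_{L^q}\le C\|f\|_{L^s}$.
\item If $s<q\le p$, interpolate: $\|f\|_{L^q}\le\|f\|_{L^p}^{\theta}\|f\|_{L^s}^{1-\theta}$ with $\theta\in(0,1)$. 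Young's inequality then gives $C\|f\|_{L^q}\le\frac12\|f\|_{L^p}+C'\|f\|_{L^s}$, and the $\frac12\|f\|_{L^p}$ is absorbed into the left side.
\end{itemize}
This absorption is legitimate because $f\in W^{1,r}\subset L^p$ makes $\|f\|_{L^p}$ finite.

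\emph{Step 2: the case $q<1$.} Here the classical inequality is not directly available. Fix an auxiliary exponent $\tilde q\in[1,p]$; if $p<1$ the statement becomes easier, and one may take $\tilde q=p$ after trivial adjustments.
\begin{enumerate}
\item Apply Step 1 with $\tilde q$ in place of $q$. This gives $\|f\|_p\lesssim\|\nabla f\|_r^{\tilde a}\|f\|_{\tilde q}^{1-\tilde a}+\|f\|_s$.
\item Interpolate $\|f\|_{\tilde q}\le\|f\|_p^{\lambda}\|f\|_q^{1-\lambda}$, where $\frac1{\tilde q}=\frac{\lambda}{p}+\frac{1-\lambda}{q}$.
\item Substitute to obtain $\|f\|_p\lesssim\|\nabla f\|_r^{\tilde a}\|f\|_p^{\lambda(1-\tilde a)}\|f\|_q^{(1-\lambda)(1-\tilde a)}+\|f\|_s$.
\item Since $\lambda(1-\tilde a)<1$, either divide by the power of $\|f\|_p$ or use Young's inequality to absorb it.
\end{enumerate}
This yields $\|f\|_p\lesssim\|\nabla f\|_r^{b}\|f\|_q^{1-b}+\|f\|_s$ for some exponent $b$.

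\emph{Step 3: identify the exponent.} I would check that $b=\tilde a/(1-\lambda(1-\tilde a))$ equals the $a$ of the statement. This follows from scaling: both sides must be homogeneous under the dilation heuristic, which forces the unique exponent balancing $\frac1q-\frac1p$ against $\frac1q+\frac1n-\frac1r$. Algebraically, it is a direct substitution of $\tilde a$ and $\lambda$.

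\emph{Main obstacle.} The only delicate point is the absorption step when the power of $\|f\|_p$ is nonzero. Dividing, or using Young's inequality, requires $\lambda(1-\tilde a)<1$, which holds since $\lambda<1$ when $q<\tilde q$. It also requires handling the degenerate cases $a=0$ and $a=1$ separately; these reduce to Hölder's inequality or Sobolev embedding directly.
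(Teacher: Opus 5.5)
The paper does not prove this lemma; it cites Li--Lankeit [Lemma 2.3]. Your argument therefore has to stand on its own, and its main line does. Step 1 is fine. For $q<1<p$, Step 2 works with any $\tilde q\in[1,p)$. Your exponent claim in Step 3 is also correct, since
\[
1-\lambda(1-\tilde a)=\frac{\bigl(\frac1q+\frac1n-\frac1r\bigr)\bigl(\frac1{\tilde q}-\frac1p\bigr)}{\bigl(\frac1q-\frac1p\bigr)\bigl(\frac1{\tilde q}+\frac1n-\frac1r\bigr)},
\]
which gives $\tilde a/(1-\lambda(1-\tilde a))=a$. There is, however, a genuine gap in the range $q<1$, $p\le 1$, which the statement includes.

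\textbf{The gap.} Your justification of the absorption is inverted. From $\frac1{\tilde q}=\frac{\lambda}{p}+\frac{1-\lambda}{q}$ one gets $\lambda=\frac{1/q-1/\tilde q}{1/q-1/p}$, so $q<\tilde q$ only gives $\lambda>0$. Both $\lambda<1$ and $\tilde a>0$ are equivalent to $\tilde q<p$. Hence $\lambda(1-\tilde a)<1$ requires $\tilde q\in[1,p)$, which is impossible when $p\le 1$.
\begin{itemize}
\item For $p=1$ you are forced to take $\tilde q=p$. Then $\tilde a=0$, $\lambda=1$, and the inequality collapses to $\|f\|_{L^p}\lesssim\|f\|_{L^p}+\|f\|_{L^s}$, so there is nothing to absorb.
\item For $p<1$, your suggested $\tilde q=p$ lies outside Step 1, which needs $\tilde q\ge 1$. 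It is degenerate in the same way.
\end{itemize}
Nor is this case trivial: for $s<p$ the gradient term cannot be dropped.

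\textbf{A repair.} Since $r\ge 1$ gives $\frac1r-\frac1n<1$, you can pick $P>1$ with $\frac1r\le\frac1n+\frac1P$, so that $W^{1,r}\subset L^P$. Apply your $q<1<P$ result to $\|f\|_{L^P}$. Then interpolate $\|f\|_{L^p}\le\|f\|_{L^P}^{\mu}\|f\|_{L^q}^{1-\mu}$ with $\frac1p=\frac{\mu}{P}+\frac{1-\mu}{q}$, and use $(x+y)^\mu\le x^\mu+y^\mu$.
\begin{itemize}
\item The main term has the right exponent: one checks $\mu a_P=a$, where $a_P$ is the exponent for the pair $(P,q)$.
\item The cross term satisfies $\|f\|_{L^s}^{\mu}\|f\|_{L^q}^{1-\mu}\le\varepsilon\|f\|_{L^q}+C_\varepsilon\|f\|_{L^s}$. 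Combined with $\|f\|_{L^q}\le|\Omega|^{\frac1q-\frac1p}\|f\|_{L^p}$, this can be absorbed into the finite left-hand side.
\end{itemize}

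This edge case never arises in the paper. There the Lebesgue exponent on the left is always at least $2$, for example $2+\frac{4p_0}{np}$ in Lemma~\ref{w}. So your argument, as written, already covers every use the paper makes of the lemma.
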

We now recall an elementary ordinary differential inequality which will be later used in sequel sections.
\begin{lemma} \label{ODI}
    Let $T\in (0, \infty]$ and $\tau \in (0,T)$, $a>0$ and $b>0$, and assume that $y: [t_0,T) \to [0,\infty)$ for some $t_0 \in \mathbb{R}$ is absolutely continuous and such that 
    \begin{align*}
        y'(t)+ay(t) \leq h(t)\qquad \text{for a.e } t \in (t_0,T)
    \end{align*}
    with some nonnegative function $h \in L^1_{loc}([t_0,T))$ satisfying 
    \begin{align*}
        \frac{1}{\tau }\int_t^{t+\tau }h(s)\, ds \leq b \qquad \text{for all }t\in [t_0,T-\tau ). 
    \end{align*}
    Then 
    \begin{align*}
        y(t) \leq y(t_0)e^{-a(t-t_0)}+\frac{b \tau }{1- e^{-a\tau }} \qquad \text{for all }t\in [0,T).
    \end{align*}
\end{lemma}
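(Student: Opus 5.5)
Since $y$ is absolutely continuous and $y'+ay\le h$ almost everywhere, we have $\frac{d}{dt}\big(e^{a(t-t_0)}y(t)\big)\le e^{a(t-t_0)}h(t)$ a.e. Integrating gives
\begin{align*}
y(t)\le y(t_0)e^{-a(t-t_0)}+\int_{t_0}^t e^{-a(t-s)}h(s)\,ds \qquad\text{for all } t\in[t_0,T).
\end{align*}
The first term is already the one in the claim. What remains is to bound the convolution integral by $\frac{b\tau}{1-e^{-a\tau}}$, using only the averaged bound on $h$ over windows of length $\tau$.

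To do this, fix $t$ and split $(t_0,t)$ into windows counted backward from $t$. For $j=0,1,\dots,N$, set $I_j=(t-(j+1)\tau,\,t-j\tau)\cap(t_0,t)$, where $N$ is the largest index with $t-N\tau>t_0$. On $I_j$ we have $e^{-a(t-s)}\le e^{-aj\tau}$, and $h\ge0$, so
\begin{align*}
\int_{I_j} e^{-a(t-s)}h(s)\,ds\le e^{-aj\tau}\int_{I_j}h(s)\,ds .
\end{align*}
For every full window, $t-(j+1)\tau\ge t_0$, and the left endpoint lies in $[t_0,T-\tau)$ because $t<T$. The hypothesis then gives $\int_{I_j}h\le b\tau$.

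The main obstacle is the last, partial window adjacent to $t_0$, since the hypothesis only controls windows starting at points $\ge t_0$. This window is contained in $(t_0,t_0+\tau)$, and $t_0+\tau\le t-N\tau<T$, so $t_0\in[t_0,T-\tau)$ and $\int_{t_0}^{t_0+\tau}h\le b\tau$. Because $h\ge0$, the integral over the partial window is bounded by this full-window integral, hence also by $b\tau$. (If $t-t_0<\tau$ there is only this one window. In that case we need $t_0<T-\tau$, which holds because $\tau<T$ when $t_0=0$; if $t_0+\tau\ge T$, the hypothesis is vacuous, and I would expect the lemma to be applied only with $t_0+\tau<T$. I would flag this edge case.)

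Summing the geometric series gives
\begin{align*}
\int_{t_0}^t e^{-a(t-s)}h(s)\,ds\le b\tau\sum_{j\ge0}e^{-aj\tau}=\frac{b\tau}{1-e^{-a\tau}},
\end{align*}
which is the claim for all $t\in[t_0,T)$. The statement writes $t\in[0,T)$; this should read $t\ge t_0$, which is the natural domain of $y$.
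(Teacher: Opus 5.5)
Your proof is correct and follows the standard argument behind Winkler's Lemma 3.4, which the paper cites in place of a proof: variation of constants, then splitting $(t_0,t)$ into windows of length $\tau$ counted backward from $t$ and summing the geometric series. Your remarks are also accurate: the conclusion should be stated for $t\in[t_0,T)$, and the lemma really needs $t_0+\tau<T$, which follows from $\tau<T$ only when $t_0\le 0$; without it the statement can fail. One slip to fix: maximality of $N$ gives $t-N\tau\le t_0+\tau$, not $t_0+\tau\le t-N\tau$, so justify $t_0+\tau<T$ instead by $t_0+\tau\le t<T$, which holds whenever $t-t_0\ge\tau$.
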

\begin{proof}
     For a detailed proof, we refer the reader to \cite[ Lemma 3.4]{Winkler+2019}.
\end{proof}

The following lemma, which follows directly from \cite{Winkler_preprint}[Corollary 1.2], presents a modified form of the Gagliardo–Nirenberg interpolation inequality.

\begin{lemma}\label{C52.ILGN}
Let $\Omega \subset \mathbb{R}^2$ be a bounded domain with a smooth boundary, and suppose $m > 0$ and $\sigma > \xi \geq 0$. Then, for every $\varepsilon > 0$, there exists a constant $C = C(\varepsilon, \xi, \sigma) > 0$ such that the inequality  
\begin{align}\label{C52.ILGN.1}
    \int_\Omega w^{m+1} \ln^{\xi}(w+e) \,dx 
    &\leq \varepsilon \left( \int_\Omega w \ln^{\sigma}(w+e) \,dx \right) 
    \left( \int_\Omega |\nabla w^{\frac{m}{2}}|^2 \,dx \right) \notag \\
   & \quad + \varepsilon \left( \int_\Omega w \,dx \right)^m 
    \left( \int_\Omega w \ln^{\sigma}(w+e) \,dx \right) + C,
\end{align}
holds for any nonnegative function $w \in C^1(\bar{\Omega})$.
\end{lemma}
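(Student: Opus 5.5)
The plan is to reduce Lemma \ref{C52.ILGN} to \cite{Winkler_preprint}[Corollary 1.2] and, to see why that result applies, to reconstruct its mechanism directly. That mechanism is a level-set splitting combined with the two-dimensional Gagliardo--Nirenberg inequality of Lemma \ref{GN}, where the gap $\sigma>\xi$ is what produces the small constant $\varepsilon$.

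\textbf{Step 1: low part.} Fix a large level $K>0$, to be chosen depending on $\varepsilon$. On $\{w\le K\}$ the integrand satisfies $w^{m+1}\ln^\xi(w+e)\le K^{m+1}\ln^\xi(K+e)$. This part is therefore absorbed into the constant $C=C(\varepsilon,\xi,\sigma)$.

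\textbf{Step 2: high part and the log weight.} Cover $\{w>K\}$ by dyadic layers $\{2^jK<w\le 2^{j+1}K\}$, on which $\ln(w+e)$ is comparable to $\ln(2^jK)$. Alternatively, and more cleanly, work with the truncation
\[
f:=\bigl(w^{m/2}-K^{m/2}\bigr)_+ ,
\]
and control $\ln^\xi(w+e)$ on $\{w>K\}$ by
\[
\ln^{\xi}(w+e)\le \ln^{\xi-\sigma}(K+e)\,\ln^\sigma(w+e).
\]
This is where $\sigma>\xi$ enters, since $\ln^{\xi-\sigma}(K+e)\to0$ as $K\to\infty$.

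\textbf{Step 3: Gagliardo--Nirenberg.} For $n=2$, apply Lemma \ref{GN} with $p=2+\frac2m$, $q=\frac2m$, $r=2$. The exponent is $a=\frac{m}{m+1}$, so $pa=2$ and $p(1-a)=\frac2m$. This yields
\[
\int_\Omega f^{2+\frac2m}\le C\Bigl(\int_\Omega|\nabla f|^2\Bigr)\int_\Omega f^{\frac2m}+C\Bigl(\int_\Omega f^{\frac2m}\Bigr)^{m+1}.
\]
Three facts about the truncation are then used:
\begin{itemize}
\item $\int_\Omega f^{2/m}\le\int_{\{w>K\}}w$;
\item $|\nabla f|\le|\nabla w^{m/2}|$;
\item on $\{w>2^{2/m}K\}$, say, $w^{m+1}\le C f^{2+2/m}$.
\end{itemize}

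\textbf{Step 4: placing the weight (main obstacle).} The main difficulty is to put the weight $\ln^\xi$ on the correct factor, namely the $L^1$-type factor rather than the gradient factor. The first thing I would try is to perform the Gagliardo--Nirenberg estimate layer by layer, with a truncation $f_j$ for each dyadic level $2^jK$. This gives, on each layer, a bound of the form $C\ln^\xi(2^{j}K)\int|\nabla w^{m/2}|^2\int_{\{w>2^jK\}}w$. Summing over $j$ requires
\[
\sum_j \ln^\xi(2^jK)\int_{\{w>2^jK\}}w\lesssim \int_{\{w>K\}} w\,\ln^{\xi+1}(w+e).
\]
This summation loses one logarithm, and I expect it to be the real obstacle; it would need a finer, non-dyadic splitting. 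The fallback is simply to check that the hypotheses of \cite{Winkler_preprint}[Corollary 1.2] match ours, namely $n=2$, $m>0$, $\sigma>\xi\ge0$ and $w\in C^1(\bar\Omega)$ nonnegative.

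\textbf{Step 5: bookkeeping.} In either route, $\int_{\{w>K\}}w\ln^\xi(w+e)\le\ln^{\xi-\sigma}(K+e)\int_\Omega w\ln^\sigma(w+e)$, so choosing $K$ large makes the coefficient at most $\varepsilon$. For the second term, use $\bigl(\int_{\{w>K\}}w\bigr)^{m+1}\le(\int_\Omega w)^m\int_{\{w>K\}}w$, treated the same way. Together these give \eqref{C52.ILGN.1}.
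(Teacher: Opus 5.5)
Your fallback is what the paper does. The paper gives no argument of its own: it says the lemma follows from Winkler's Corollary 1.2 and points to Lemmas 2.4--2.5 of \cite{Minh5}. So as a proof by citation your proposal matches. Your self-contained reconstruction, however, does not close as written, and the repair is not the one you suggest.

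The single-truncation version in Step 2 cannot work alone. Lemma \ref{GN} controls only the unweighted quantity $\int_\Omega f^{2+2/m}$, while $\ln^\xi(w+e)$ is unbounded on $\{w>K\}$. Rewriting the weight as $\ln^{\xi-\sigma}(K+e)\ln^\sigma(w+e)$ just moves the difficulty to a $\ln^\sigma$-weighted left-hand side. So layering is unavoidable, and, as you correctly observe, dyadic layers lose a full logarithm, which is fatal whenever $\sigma-\xi<1$.

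The missing idea is to make the layers \emph{coarser}, not finer. Take doubly exponential levels $K_j:=K_0^{2^j}$ with $K_0\ge\max\{e,2^{4/m}\}$, and set $f_j:=(w^{m/2}-K_{j-1}^{m/2})_+$ for $j\ge1$. Since $K_{j-1}=K_j^{1/2}$, on $\{w>K_j\}$ you still have $f_j\ge w^{m/2}-w^{m/4}\ge\frac12 w^{m/2}$. On $\{K_j<w\le K_{j+1}\}$ the weight changes only by a bounded factor: $\ln(w+e)\le 2\ln K_j+1\le 3\ln K_j$. Your Step 3, with $\int_\Omega f_j^{2/m}\le\int_{\{w>K_{j-1}\}}w$ and $|\nabla f_j|\le|\nabla w^{m/2}|$, then gives
\[
\int_{\{K_j<w\le K_{j+1}\}} w^{m+1}\ln^\xi(w+e)\le C\ln^\xi K_j\,\Bigl(\int_\Omega|\nabla w^{m/2}|^2+\Bigl(\int_\Omega w\Bigr)^m\Bigr)\int_{\{w>K_{j-1}\}}w .
\]
Because $\ln^\xi K_j=2^{j\xi}\ln^\xi K_0$ grows geometrically for $\xi>0$, the partial sums of the weights are dominated by their last term:
\[
\sum_{j\ge1}\ln^\xi K_j\int_{\{w>K_{j-1}\}}w=\sum_{i\ge0}\Bigl(\sum_{j=1}^{i+1}\ln^\xi K_j\Bigr)\int_{\{K_i<w\le K_{i+1}\}}w\le\frac{4^\xi}{2^\xi-1}\int_{\{w>K_0\}}w\ln^\xi(w+e).
\]
No logarithm is lost. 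The remaining pieces are routine:
\begin{itemize}
\item The case $\xi=0$ reduces to $\xi=\sigma/2$, because $\ln(w+e)\ge1$.
\item The region $\{w\le K_1\}$ is absorbed into $C$.
\item Your Step 5, with $K_0$ so large that $C\,\frac{4^\xi}{2^\xi-1}\ln^{\xi-\sigma}(K_0+e)\le\varepsilon$, finishes the argument.
\end{itemize}
With this change your Steps 1--5 become a complete elementary proof that uses only Lemma \ref{GN} and does not depend on the external corollary.
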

\begin{proof}
   For a detailed proof, we refer the reader to Lemmas 2.4 and 2.5 in \cite{Minh5}.
\end{proof}

\section{A Priori Estimate} \label{S3}
In this section, we present several useful estimates for solutions to \eqref{1}, ranging from a straightforward $L^1$ bound for $(u,v,w)$ to the more intricate $L^\infty$ bound for $w$. To prove that the quadratic logistic damping $-\mu u^2$ prevents blow-up when $\mu$ is sufficiently large in dimensions $n < 6$, we must carefully track the dependence of all constants on $\mu$. Accordingly, throughout this section, we explicitly indicate the dependence on $\mu$ for all constants that require specification. Let us begin with the following basic results, which are stated in the next two lemmas.   
\begin{lemma} \label{L1}
    Let $\gamma=1$, $n \geq 2$, $\alpha>0$, $k \in [0,1)$, $\chi \in \mathbb{R}$ and $\mu>0$. There exist $C_1>0$ independent of $\mu$ such that 
    \begin{align} \label{L1-1}
        \int_\Omega u(\cdot,t) +\int_\Omega v(\cdot,t) +\int_\Omega w(\cdot,t) \leq C_1 \qquad \text{for all }t\in (0,T_{\rm max}).
    \end{align}
    Moreover, there is $C_2>0$ which is independent of $\mu $ when $\mu>1$ such that 
    \begin{align} \label{L1-2}
        \int_t ^{t+\tau} \int_\Omega \frac{u^{1+\alpha}}{\ln^k(u+e)} \leq C_2 \qquad \text{for all }t\in (0,T_{\rm max}-\tau),
    \end{align}
    where $\tau = \min \left \{1, \frac{T_{\rm max}}{2} \right \}$.
\end{lemma}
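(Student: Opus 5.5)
We integrate the three equations of \eqref{1} over $\Omega$ and use the homogeneous Neumann conditions. With $\gamma=1$ this gives
\begin{align*}
\frac{d}{dt}\int_\Omega u &= \int_\Omega \kappa - \int_\Omega u - \int_\Omega uw - \mu\int_\Omega \frac{u^{1+\alpha}}{\ln^k(u+e)},\\
\frac{d}{dt}\int_\Omega v &= -\int_\Omega v + \int_\Omega uw,\\
\frac{d}{dt}\int_\Omega w &= -\int_\Omega w + \int_\Omega v .
\end{align*}
The nonlinear infection term $uw$ cancels in the combination $y(t):=\int_\Omega u+\int_\Omega v$. Using $\int_\Omega\kappa\le \kappa_*|\Omega|$, this yields
\begin{align*}
y'(t)+y(t)+\mu\int_\Omega \frac{u^{1+\alpha}}{\ln^k(u+e)}\le \kappa_*|\Omega|.
\end{align*}
Dropping the nonnegative damping term and applying an ODE comparison (or Lemma \ref{ODI} with constant $h$) gives $y(t)\le \max\{y(0),\kappa_*|\Omega|\}$. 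This bound does not depend on $\mu$.

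The $w$-equation then reads $\frac{d}{dt}\int_\Omega w+\int_\Omega w=\int_\Omega v\le \max\{y(0),\kappa_*|\Omega|\}$. A second comparison bounds $\int_\Omega w$ by $\max\{\int_\Omega w_0,\max\{y(0),\kappa_*|\Omega|\}\}$. Summing the two bounds gives \eqref{L1-1} with $C_1$ independent of $\mu$.

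For \eqref{L1-2}, we integrate the inequality for $y$ over $(t,t+\tau)$ and keep the damping term:
\begin{align*}
\mu\int_t^{t+\tau}\int_\Omega \frac{u^{1+\alpha}}{\ln^k(u+e)}\le y(t)+\kappa_*|\Omega|\tau\le C_1+\kappa_*|\Omega|.
\end{align*}
Here we used $\tau\le 1$ and $y(t+\tau)\ge 0$ (by positivity of $u$ and $v$ from Lemma \ref{local}). Dividing by $\mu$ gives \eqref{L1-2} with $C_2=(C_1+\kappa_*|\Omega|)/\mu$. When $\mu>1$ this is at most $C_1+\kappa_*|\Omega|$, which is independent of $\mu$.

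No step is a genuine obstacle. The only point requiring care is to discard the damping term before deriving the $\mu$-independent $L^1$ bound, and to retain it only afterwards for the space-time estimate.
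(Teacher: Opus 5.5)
Your proof is correct and follows the paper's argument step for step. You add the $u$- and $v$-equations so that $uw$ cancels, bound $\int_\Omega u+\int_\Omega v$ and then $\int_\Omega w$ by comparison (with the same constants $\max\{y(0),\kappa_*|\Omega|\}$ and $\max\{\int_\Omega w_0,\cdot\}$), and finally integrate the same inequality over $(t,t+\tau)$ while keeping the damping term, using $\tau\le 1$ and nonnegativity of $y$ as the paper implicitly does.
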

\begin{proof}
    Integrating the first two equations of \eqref{1} over $\Omega$ and adding them together, we obtain that 
    \begin{align} \label{L1.1}
        \frac{d}{dt} \left \{ \int_\Omega u + \int_\Omega v \right \}+ \int_\Omega u + \int_\Omega v + \mu \int_\Omega \frac{u^{1+\alpha}}{\ln^k(u+e)} \leq \int_\Omega \kappa \qquad \text{for all }t\in (0,T_{\rm max}).
    \end{align}
    By applying Gronwall inequality, it follows that 
    \begin{align} \label{L1.2}
        \int_\Omega u(\cdot,t) + \int_\Omega v(\cdot,t) \leq c_1 \qquad \text{for all }t\in (0,T_{\rm max}),
    \end{align}
    where $c_1= \max \left \{ \int_\Omega u_0 +\int_\Omega v_0, \kappa_* |\Omega| \right \}$. From the third equation of \eqref{1}, we infer that 
    \begin{align*}
        \frac{d}{dt}\int_\Omega w + \int_\Omega w \leq c_1 \qquad \text{for all }t\in (0,T_{\rm max}).
    \end{align*}
    This, together with Gronwall inequality implies that 
    \begin{align*}
        \int_\Omega w(\cdot,t)  \leq c_2 \qquad \text{for all }t\in (0,T_{\rm max}),
    \end{align*}
    where $c_2= \max \left \{\int_\Omega w_0, c_1 \right \}$. Now, integrating \eqref{L1.1} over $(t,t+\tau)$ with   $\tau = \min \left \{1, \frac{T_{\rm max}}{2} \right \}$ yields
    \begin{align*}
        \mu \int_t ^{t+\tau}\int_\Omega \frac{u^{1+\alpha}}{\ln^k(u+e)} &\leq \kappa_*|\Omega| \tau + \int_\Omega u(\cdot,t) +\int_\Omega v(\cdot,t) \notag 
        \\
        &\leq c_1+\kappa_*|\Omega|  \qquad \text{for all }t\in (0,T_{\rm max}-\tau).
    \end{align*}
    This proves \eqref{L1-2} and completes the proof.
\end{proof}

\begin{lemma} \label{L1'}
     Let $\gamma=0$, $n \geq 2$, $\alpha>0$, $k \in [0,1)$, $\chi \in \mathbb{R}$ and $\mu>0$. There exist $C>0$ such that 
    \begin{align*} 
        \int_\Omega u(\cdot,t)  \leq C \qquad \text{for all }t\in (0,T_{\rm max})
    \end{align*}
    and 
     \begin{align*} 
        \int_\Omega w(\cdot,t)  \leq C \qquad \text{for all }t\in (0,T_{\rm max}).
    \end{align*}
\end{lemma}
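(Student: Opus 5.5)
For $\gamma=0$ we argue as in Lemma \ref{L1}, except that the second equation is now elliptic, so $\int_\Omega v$ carries no time derivative. The plan is to integrate the first two equations over $\Omega$ and add them, so that the infection terms $\pm\int_\Omega uw$ cancel. Integrating the second equation with the Neumann condition gives
\begin{align*}
0=-\int_\Omega v+\int_\Omega uw, \qquad\text{that is,}\qquad \int_\Omega v=\int_\Omega uw .
\end{align*}
Integrating the first equation, the diffusion and chemotactic terms vanish by the boundary conditions, and dropping the nonpositive logistic term we get
\begin{align*}
\frac{d}{dt}\int_\Omega u+\int_\Omega u+\int_\Omega uw \leq \int_\Omega \kappa\leq \kappa_*|\Omega| \qquad\text{for all } t\in(0,T_{\rm max}).
\end{align*}

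From this, $\frac{d}{dt}\int_\Omega u+\int_\Omega u\le \kappa_*|\Omega|$. An ODE comparison (Gronwall) then yields $\int_\Omega u\le \max\{\int_\Omega u_0,\kappa_*|\Omega|\}$ on $(0,T_{\rm max})$. This is the first assertion.

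For $w$, integrating the third equation gives
\begin{align*}
\frac{d}{dt}\int_\Omega w+\int_\Omega w=\int_\Omega v=\int_\Omega uw .
\end{align*}
The right-hand side is not pointwise controlled, because we have no uniform-in-time $L^1$ bound for $v$. This is the main obstacle, and it is consistent with the remark after Theorem \ref{thm2}. To handle it, add the $u$- and $w$-identities:
\begin{align*}
\frac{d}{dt}\Big\{\int_\Omega u+\int_\Omega w\Big\}+\int_\Omega u+\int_\Omega w\leq \kappa_*|\Omega| .
\end{align*}
The $uw$ terms cancel exactly, since $\int_\Omega v=\int_\Omega uw$ enters the $w$-equation with a plus sign and the $u$-equation with a minus sign. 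Gronwall then gives
\begin{align*}
\int_\Omega u+\int_\Omega w\le \max\Big\{\int_\Omega u_0+\int_\Omega w_0,\ \kappa_*|\Omega|\Big\},
\end{align*}
which bounds $\int_\Omega w$ and completes the plan. The only point to check is regularity: $t\mapsto\int_\Omega v$ need not be differentiable, but it never needs to be, since $v$ only enters through the identity $\int_\Omega v=\int_\Omega uw$ at each fixed time. The functions $\int_\Omega u$ and $\int_\Omega w$ are $C^1$ on $(0,T_{\rm max})$ and continuous at $t=0$ by Lemma \ref{local}, which suffices for the comparison argument.
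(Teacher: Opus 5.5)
Your proof is correct. It differs from the paper's only in how $\int_\Omega w$ is controlled.

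The paper gets the bound on $\int_\Omega u$ exactly as you do. It then integrates $\frac{d}{dt}\int_\Omega u+\int_\Omega u+\int_\Omega uw\le\kappa_*|\Omega|$ over $(t,t+\tau)$. This gives the space--time bound $\int_t^{t+\tau}\int_\Omega uw\le \kappa_*|\Omega|\tau+\sup_{s}\int_\Omega u(\cdot,s)$, which is \eqref{L1'.2}. That bound is fed into $\frac{d}{dt}\int_\Omega w+\int_\Omega w=\int_\Omega v$, using the identity $\int_\Omega v=\int_\Omega uw$; the paper uses this identity tacitly and never writes it down. The bound on $\int_\Omega w$ then follows from the time-averaged comparison principle of Lemma \ref{ODI}.

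You instead add the $u$- and $w$-identities, so that $\int_\Omega uw=\int_\Omega v$ cancels pointwise in time. This has three advantages:
\begin{itemize}
\item it needs only the classical Gronwall inequality, not the averaged ODE lemma;
\item it gives the explicit constant $\max\{\int_\Omega u_0+\int_\Omega w_0,\kappa_*|\Omega|\}$;
\item it makes the role of the elliptic equation explicit.
\end{itemize}
The paper's route produces \eqref{L1'.2} as a by-product, and this is cited later in Lemma \ref{Lp'}. Nothing is lost with your approach, though: one time integration of your first differential inequality over $(t,t+\tau)$ recovers it.
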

\begin{proof}
    Integrating the first equation of \eqref{1} over $\Omega$ yields
    \begin{align} \label{L1'.1}
        \frac{d}{dt}\int_\Omega u +\int_\Omega u +\int_\Omega uw &\leq   \int_\Omega \kappa      \notag \\
        &\leq \kappa_* |\Omega| \qquad \text{for all }t\in (0,T_{\rm max}).
    \end{align}
    Employing Gronwall inequality to \eqref{L1'.1}, we obtain that 
    \begin{align}
        \int_\Omega u(\cdot,t) \leq  c_1 \qquad \text{for all }t\in (0,T_{\rm max}),
    \end{align}
    where $c_1=\left \{\kappa_* |\Omega|, \int_\Omega u_0 \right \}$. Now, integrating \eqref{L1'.1} over $(t, t+\tau)$ where $\tau = \min \left \{1, \frac{T_{\rm max}}{2} \right \}$, we deduce that 
    \begin{align} \label{L1'.2}
        \int_t^{t+\tau } \int_\Omega u(\cdot,s) w(\cdot,s)\, ds &\leq \kappa_* |\Omega| \tau + \int_\Omega u(\cdot,t) \notag \\
        &\leq c_2 \qquad  \text{for all }t\in (0,T_{\rm max}-\tau ),
    \end{align}
    where $c_2 = \kappa_* |\Omega| \tau+c_1$. Finally, we integrate the third equation of \eqref{1} over $\Omega$ to obtain
    \begin{align*}
        \frac{d}{dt} \int_\Omega w+ \int_\Omega w = \int_\Omega v \qquad \text{for all }t\in (0,T_{\rm max}),
    \end{align*}
    Using this and \eqref{L1'.2}, we infer that there exists $c_3>0$ such that 
    \begin{align*} 
        \int_\Omega w(\cdot,t)  \leq c_3 \qquad \text{for all }t\in (0,T_{\rm max}).
    \end{align*}
    The proof is now complete.
\end{proof}

The main objective of this section is to derive a uniform-in-time $L^\infty$ bound for $w$, which is far from obvious. To accomplish this, we employ an iteration argument; accordingly, we require the following lemma, which provides a key estimate for initiating the iterative procedure.

\begin{lemma} \label{w}
 Let $\gamma \in \left \{0,1 \right \}$, $n \geq 2$, $\alpha>0$, $k \in [0,1)$, $\chi \in \mathbb{R}$ and $\mu>0$.   Let $p_0 \in [ 1, \infty)$  and assume that there exists $M>0$ independent of $\mu$ if $\mu>1$ such that  
   \begin{align} \label{w-1}
       \int_\Omega w^{p_0}(\cdot,t) \leq M \qquad \text{for all }t\in (0,T_{\rm max}). 
   \end{align}
   Then for any $p>1$, there exist positive constants $c_1$, $c_2$ and $c_3$ independent of $\mu$ if $\mu>1$ such that 
    \begin{align*}
        \frac{d}{dt} \int_\Omega w^{p} +p \int_\Omega w^p +c_1 \int_\Omega w^{p+\frac{2p_0}{n}} \leq c_2 \int_\Omega v^{\frac{p+\frac{2p_0}{n}}{\frac{2p_0}{n}+1}} +c_3 \qquad \text{for all } t\in (0,T_{\rm max}).
    \end{align*}
\end{lemma}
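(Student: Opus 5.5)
We test the third equation of \eqref{1} against $w^{p-1}$, which gives
\begin{align*}
\frac{1}{p}\frac{d}{dt}\int_\Omega w^p + \frac{4(p-1)}{p^2}\int_\Omega |\nabla w^{\frac p2}|^2 + \int_\Omega w^p = \int_\Omega v w^{p-1}.
\end{align*}
Multiplying by $p$ yields the term $p\int_\Omega w^p$ on the left. The plan is to turn the gradient term into a superlinear absorption term $c_1\int_\Omega w^{p+\frac{2p_0}{n}}$, using the Gagliardo--Nirenberg inequality (Lemma \ref{GN}) together with the $\mu$-independent bound \eqref{w-1}. We then handle $\int_\Omega v w^{p-1}$ with Young's inequality.

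\textbf{Step 1: Gagliardo--Nirenberg.} We apply Lemma \ref{GN} to $f=w^{p/2}$ with exponents $P=\frac{2(p+\frac{2p_0}{n})}{p}$, $r=2$, $q=\frac{2p_0}{p}$ and $s=q$. Then $\|f\|_{L^q}^{q}=\int_\Omega w^{p_0}\le M$. The condition $\frac1r\le\frac1n+\frac1P$ reduces to $\frac12\le \frac1n+\frac{p}{2(p+2p_0/n)}$, which is easily checked. When $q<1$ or $q>P$, Lemma \ref{GN} permits $0<q\le P$, and $q\le P$ holds since $p_0\le p+\frac{2p_0}{n}$ trivially. The exponent $a$ is chosen so that $Pa=2$. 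Indeed,
\[
a=\frac{\frac{p}{2p_0}-\frac{p}{2(p+2p_0/n)}}{\frac{p}{2p_0}+\frac1n-\frac12},
\]
and a direct computation shows $Pa=2$: this is exactly the scaling that produces the exponent $p+\frac{2p_0}{n}$. We therefore obtain
\[
\int_\Omega w^{p+\frac{2p_0}{n}} \le C_{GN}\Big(M^{\frac{P(1-a)}{q}}\int_\Omega|\nabla w^{\frac p2}|^2 + M^{\frac{P}{q}}\Big),
\]
and hence $\int_\Omega|\nabla w^{p/2}|^2\ge c\int_\Omega w^{p+\frac{2p_0}{n}}-c'$. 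The constants depend only on $M,p,p_0,n,\Omega$, so they are independent of $\mu$ whenever $M$ is.

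\textbf{Step 2: Young's inequality.} Set $\theta=p+\frac{2p_0}{n}$. We bound $p\int_\Omega v w^{p-1}$ by Young's inequality with exponents $\frac{\theta}{p-1}$ on $w^{p-1}$ and its conjugate $\frac{\theta}{\theta-p+1}=\frac{p+\frac{2p_0}{n}}{\frac{2p_0}{n}+1}$ on $v$:
\[
p\int_\Omega v w^{p-1}\le \frac{c_1}{2}\int_\Omega w^{\theta}+c_2\int_\Omega v^{\frac{p+\frac{2p_0}{n}}{\frac{2p_0}{n}+1}}.
\]
This is exactly the exponent of $v$ in the statement. Absorbing the first term into the left-hand side gives the claim, with $c_3$ collecting the additive constants.

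\textbf{Main obstacle.} The only delicate point is verifying the Gagliardo--Nirenberg parameters, namely that $a\in[0,1]$ and $Pa=2$, including the case $q=\frac{2p_0}{p}<1$ covered by Lemma \ref{GN}. We must also track that no constant depends on $\mu$, which holds because only $M$ enters.
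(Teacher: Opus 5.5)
Your argument is the same as the paper's. Both apply Lemma \ref{GN} to $w^{p/2}$ with $P=2+\frac{4p_0}{np}$, $r=2$, $q=\frac{2p_0}{p}$, chosen so that $Pa=2$. Both then apply Young's inequality to $p\int_\Omega vw^{p-1}$ with exponents $\frac{\theta}{p-1}$ and $\frac{\theta}{\theta-p+1}$, where $\theta=p+\frac{2p_0}{n}$.

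However, your check of the hypotheses of Lemma \ref{GN} is wrong for $n\ge 3$. You call $\frac12\le\frac1n+\frac1P$ easy and $q\le P$ trivial, but both reduce to the same inequality $p_0\le p+\frac{2p_0}{n}$, i.e.\ $p\ge\frac{(n-2)p_0}{n}$. This fails whenever $p_0>\frac{n}{n-2}$ and $1<p<\frac{(n-2)p_0}{n}$. For example, $n=3$, $p_0=6$, $p=\frac32$ gives $q=8>P=\frac{22}{3}>6=\frac{2n}{n-2}$, so $P$ exceeds the Sobolev exponent and Lemma \ref{GN} cannot be invoked.

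The conclusion survives with a one-line case distinction. If $p\le\frac{(n-2)p_0}{n}$, then $\theta\le p_0$, so Hölder's inequality and \eqref{w-1} give $\int_\Omega w^\theta\le|\Omega|^{1-\theta/p_0}M^{\theta/p_0}$. Hence $\int_\Omega|\nabla w^{p/2}|^2\ge\int_\Omega w^\theta-|\Omega|^{1-\theta/p_0}M^{\theta/p_0}$ holds trivially, with $\mu$-independent constants, and your Step 2 goes through unchanged. This case also covers the borderline $p=\frac{(n-2)p_0}{n}$, where your formula for $a$ degenerates to $0/0$.

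The paper's proof uses the same parameters in Lemma \ref{GN} without this case distinction, so it has the same omission. For $n=2$ the issue does not arise.
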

\begin{proof}
    Applying Lemma \ref{GN} and using \eqref{w-1}, there exist $c_1>0$ independent of $\mu$ and $c_2>0$ independent of $\mu $ if $\mu>1$ satisfying 
    \begin{align} \label{w.1}
        \int_\Omega w^{p+ \frac{2p_0}{n}} &= \left \| w^{\frac{p}{2}} \right \|_{L^{2+ \frac{4p_0}{np}}(\Omega)}^{2+ \frac{4p_0}{np}} \notag \\
        &\leq \left ( c_1 \left \| \nabla w^{\frac{p}{2}} \right \|_{L^2(\Omega)}^a  \left \| w^{\frac{p}{2}} \right \|_{L^{\frac{2p_0}{p}}}^{1-a} + c_1\left \| w^{\frac{p}{2}} \right \|_{L^{\frac{2p_0}{p}}} \right )^{2+ \frac{4p_0}{np}} \notag \\
        &\leq \left ( c_2 \left \| \nabla w^{\frac{p}{2}} \right \|_{L^2(\Omega)}^a  + c_2 \right )^{2+ \frac{4p_0}{np}} ,
    \end{align}
    where 
    \begin{align*}
        a= \frac{\frac{p}{2p_0}- \frac{np}{2np+4p_0}}{\frac{p}{2p_0}+\frac{1}{n}-\frac{1}{2}} \in (0,1).
    \end{align*}
    Noting that 
    \begin{align*}
        \frac{2np+4p_0}{np} \cdot a =2, 
    \end{align*}
    and using \eqref{w.1}, there exists $c_3>0$ independent of $\mu$ if $\mu>1$ such that 
    \begin{align} \label{w.2}
        \int_\Omega w^{p-2}|\nabla w|^2 \geq c_3 \int_\Omega w^{p+\frac{2p_0}{n}} -c_3. 
    \end{align}
    Multiplying the third equation of \eqref{1} by $w^{p-1}$ with $p>1$, employing \eqref{w.2}, and using Young's inequality, we can find $c_4>0$ and $c_5>0$ independent of $\mu$ if $\mu>1$ such that
    \begin{align}
        \frac{d}{dt} \int_\Omega w^p+p \int_\Omega w^p  &=-p(p-1)\int_\Omega w^{p-2}|\nabla w|^2+ p\int_\Omega v w^{p-1}  \notag\\
        &\leq -c_4 \int_\Omega w^{p+\frac{2p_0}{n}} + p\int_\Omega v w^{p-1} +c_4 \notag \\
        &\leq -\frac{c_4}{2}\int_\Omega w^{p+\frac{2p_0}{n}} +c_5 \int_\Omega v^{\frac{p+\frac{2p_0}{n}}{\frac{2p_0}{n}+1}} +c_4 \qquad \text{for all }t\in (0,T_{\rm max}),
    \end{align}
    which finishes the proof.
\end{proof}

The next lemma enables us to improve the regularity of $v$, which constitutes the second ingredient for setting up the iteration process aimed at deriving an $L^\infty$ estimate for $w$; this estimate will be subsequently established in Lemma \ref{wL}.

\begin{lemma} \label{v}
    Let $\gamma=1$, $\alpha > \frac{n-2}{4}$ with $n \geq 2$, $k=0$, $\chi \in \mathbb{R}$ and $\mu>0$. Assume that there exists $M>0$ independent of $\mu$ if $\mu>1$ such that 
    \begin{align} \label{v-1}
        \int_\Omega v^{p_0}(\cdot,t) \leq M \qquad \text{for all }t\in (0,T_{\rm max}).
    \end{align}
    for some $p_0 \in \left [1,  \frac{n}{2}\right ]$. Then for any $1 \leq p< \frac{4\alpha+2}{n}p_0$, there exists $C>0$ independent of $\mu $ if $\mu>1$ such that 
    \begin{align*}
        \int_\Omega v^p(\cdot,t) \leq C \qquad \text{for all }t\in (0,T_{\rm max}).
    \end{align*}
\end{lemma}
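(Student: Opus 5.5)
The plan is an energy estimate for $\int_\Omega v^p$ coupled with a higher-order energy for $w$, closed by Lemma \ref{ODI}. The only space-time information on $u$ is \eqref{L1-2} with $k=0$:
\[
\int_t^{t+\tau}\int_\Omega u^{1+\alpha}\le C_2 ,
\]
with $C_2$ independent of $\mu$ for $\mu>1$. So the cross term $\int_\Omega uwv^{p-1}$ will be split by a three-factor Young inequality, with $u^{1+\alpha}$ going into the forcing $h$ of Lemma \ref{ODI}. The $w$- and $v$-parts must be absorbed by dissipation of both equations.

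\textbf{Step 1 (a priori bound for $w$).} Since $w_t=\Delta w-w+v$ and $\|v\|_{L^{p_0}}\le M$, Lemma \ref{C52.Para-Reg} with $j=2$ gives $\|w(\cdot,t)\|_{L^{q_0}}\le c$ for every $q_0<\frac{np_0}{n-2p_0}$ (any $q_0<\infty$ if $p_0=\frac n2$). All constants here are independent of $\mu$.

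\textbf{Step 2 (dissipation for $v$ and $w$).} Testing the $v$-equation by $v^{p-1}$ and arguing exactly as in Lemma \ref{w}, with the Gagliardo--Nirenberg step fed by $\|v\|_{L^{p_0}}\le M$, gives
\[
\frac{d}{dt}\int_\Omega v^p+p\int_\Omega v^p+c\int_\Omega v^{B}\le p\int_\Omega uwv^{p-1}+c,\qquad B:=p+\tfrac{2p_0}{n}.
\]
Lemma \ref{w} applied with $q_0$ in place of $p_0$ and some $q>1$ gives
\[
\frac{d}{dt}\int_\Omega w^q+q\int_\Omega w^q+c\int_\Omega w^{A}\le c\int_\Omega v^{A/(1+2q_0/n)}+c,\qquad A:=q+\tfrac{2q_0}{n}.
\]
Choose $q$ so that $A/(1+2q_0/n)<B$. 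The $v$-term on the right is then bounded by $\varepsilon\int_\Omega v^B+C_\varepsilon$.

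\textbf{Step 3 (the cross term; the main obstacle).} Apply Young's inequality with three exponents $1+\alpha$, $A$ and $B/(p-1)$:
\[
\int_\Omega uwv^{p-1}\le\int_\Omega u^{1+\alpha}+\varepsilon\int_\Omega w^A+\varepsilon\int_\Omega v^B+C_\varepsilon .
\]
This requires
\[
\frac1{1+\alpha}+\frac1A+\frac{p-1}{B}\le 1 .
\]
Taking $A$ as large as Step 2 allows, i.e.\ $A$ close to $B(1+2q_0/n)$, the requirement becomes
\[
\frac{\alpha}{1+\alpha}>\frac{(1+2q_0/n)^{-1}+p-1}{p+2p_0/n}.
\]
I expect the main work to be checking that, as $q_0\uparrow\frac{np_0}{n-2p_0}$, this is equivalent to $p<\frac{4\alpha+2}{n}p_0$. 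In the case $p_0=\frac n2$ it reduces to $p<2\alpha+1$, which matches the claimed threshold, and this is the check I would do first. Possibly $\int_\Omega w^A$ is replaced by a Hölder bound $\|w\|_{L^{q_0}}^{\cdot}$ times a power of $\int_\Omega v^s$ with $s<B$ if that is sharper. If Young is too crude, I would redistribute the powers using the $L^{p_0}$-bound of $v$ and the $L^{q_0}$-bound of $w$ by interpolation.

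\textbf{Step 4 (conclusion).} Set $y=\int_\Omega v^p+\int_\Omega w^q$. Adding the two inequalities and absorbing the $\varepsilon$-terms gives
\[
y'+y\le p\int_\Omega u^{1+\alpha}+C=:h ,
\]
and $\frac1\tau\int_t^{t+\tau}h\le C$ by \eqref{L1-2}. Lemma \ref{ODI} then yields $\sup_t y(t)<\infty$. All constants are independent of $\mu$ for $\mu>1$, because $M$, $C_2$ and the constants from Lemmas \ref{C52.Para-Reg} and \ref{w} are. The case $p$ near $1$ follows from $L^{p_0}$ by Hölder, or from the same argument.
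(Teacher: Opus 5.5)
Your proposal is correct and follows essentially the same route as the paper. You get an $L^{q_0}$ bound for $w$ from parabolic regularity and dissipative inequalities for $\int_\Omega v^p$ and for a power of $w$ via Lemma \ref{w}. Young's inequality sends the $u$-part of the cross term into $\int_\Omega u^{1+\alpha}$, and the argument closes with Lemma \ref{ODI} and \eqref{L1-2}. Two differences are cosmetic: the paper does your three-factor split in two Young steps, and it matches exponents exactly, absorbing the $v$-forcing from the $w$-equation by a small weight on the $w$-energy rather than by $\varepsilon$.

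The exponent check you deferred does work out. Your condition rearranges to
\[
p<\frac{2p_0\alpha}{n}+\frac{2q_0(\alpha+1)}{n+2q_0},
\]
which is precisely the paper's parametrization of $p$. It tends to $\frac{4\alpha+2}{n}p_0$ as $q_0\uparrow\frac{np_0}{n-2p_0}$, or as $q_0\to\infty$ when $p_0=\frac n2$.
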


\begin{proof}
   For any $p \in \left [1, \frac{4\alpha+2}{n}p_0 \right ) $, we can find $q \in \left [ 1, \frac{np_0}{n-2p_0} \right )$ for $p_0<\frac{n}{2}$ and $q \in [1, \infty)$ for $p_0=\frac{n}{2}$  such that 
    \begin{align} \label{v.1'}
        p= \frac{2p_0 \alpha}{n} + \frac{2q(\alpha+1)}{n+2q}.
    \end{align}
   By Lemma \ref{C52.Para-Reg} and \eqref{v-1}, there exists $c_1>0$ independent of $\mu$ if $\mu>1$ such that 
    \begin{align}\label{v.1}
        \int_\Omega w^{q}(\cdot,t) \leq c_1 \qquad \text{for all }t\in (0,T_{\rm max}).
    \end{align}
   Letting 
    \begin{align} \label{v.2}
        r= \frac{n+2q}{n}\cdot \left ( p + \frac{2p_0}{n} \right )-\frac{2q}{n}.
    \end{align}
    Applying Lemma \ref{w}, there exist $c_2>0$ and $c_3>0$ independent of $\mu$ if $\mu>1$ such that
    \begin{align} \label{v.3}
        \frac{d}{dt}\int_\Omega w^r + r \int_\Omega w^r + c_2 \int_\Omega w^{r+ \frac{2q}{n}} \leq c_3 \int_\Omega v^{ \frac{n}{n+2q} \cdot \left (r+\frac{2q}{n} \right )}+c_3 \qquad \text{for all }t\in (0,T_{\rm max}).
    \end{align}
    By employing the same argument as \eqref{w.1}, for any $p>1$ we can find $c_4>0$ independent of $\mu$ if $\mu>1$ such that
    \begin{align} \label{v.4}
        \int_\Omega v^{p-2}|\nabla v|^2 \geq c_4 \int_\Omega v^{p+\frac{2p_0}{n}}-c_4.
    \end{align}
    Multiplying the second equation of \eqref{1} by $p v^{p-1}$, using \eqref{v.4} and applying Young's inequality with some $\epsilon>0$, we obtain 
    \begin{align}\label{v.5'}
        \frac{d}{dt}\int_\Omega v^p + p \int_\Omega v^p &= -p(p-1)\int_\Omega v^{p-2}|\nabla v|^2 + \int_\Omega v^{p-1}uw \notag \\
        &\leq -c_5\int_\Omega v^{p+\frac{2p_0}{n}} + \int_\Omega v^{p-1}uw +c_5 \notag \\
        &\leq -c_5\int_\Omega v^{p+\frac{2p_0}{n}} + c_6\int_\Omega u^{\alpha+1}+ \epsilon \int_\Omega (v^{p-1}w)^{\frac{\alpha+1}{\alpha}} +c_5  \qquad \text{for all }t\in (0,T_{\rm max}),
    \end{align}
    where $c_5>0$ independent of $\mu$ if $\mu>1$ and $c_6=c_6(\epsilon)>0$ independent of $\mu$. Employing Young's inequality and noting from \eqref{v.1'} that 
    \begin{align*}
        \left (p- \frac{2q}{n+2q} \right ) \cdot \frac{\alpha+1}{\alpha} = p+ \frac{2p_0}{n},
    \end{align*}
     it follows that 
    \begin{align} \label{v.5}
         \int_\Omega (v^{p-1}w)^{\frac{\alpha+1}{\alpha}} &\leq \int_\Omega v^{ \left (p- \frac{2q}{n+2q} \right ) \cdot \frac{\alpha+1}{\alpha} } + \int_\Omega w^{\frac{n+2q}{n} \cdot \frac{\alpha+1}{\alpha} \cdot \left ( p- \frac{2q}{n+2q} \right)} \notag \\
         &\leq \int_\Omega v^{p+\frac{2p_0}{n}} + \int_\Omega w^{\frac{n+2q}{n} \left ( p+\frac{2p_0}{n} \right )}.
    \end{align}
    Setting 
    \begin{align*}
        y(t) = \int_\Omega v^p(\cdot,t) + \lambda \int_\Omega w^r(\cdot,t) \qquad \text{for all }t \in (0,T_{\rm max}),
    \end{align*}
    with $\lambda = \frac{c_5}{2c_3}$. Let fix $\epsilon= \min \left \{ \frac{c_5}{2}, \frac{c_2c_5}{2c_3} \right \}$.
    Collecting \eqref{v.3}, \eqref{v.5'} and \eqref{v.5} and using the identity \eqref{v.2}, we arrive at 
    \begin{align*}
        y'(t) +y(t) &\leq \left ( -c_5+\lambda c_3 + \epsilon \right  ) \int_\Omega v^{p+\frac{2p_0}{n}} + \left ( -\lambda c_2 + \epsilon \right ) \int_\Omega w^{\frac{n+2q}{n} \cdot \left ( p+\frac{2p_0}{n} \right )} +c_6 \int_\Omega u^{\alpha+1} +c_7 \notag \\
        &\leq  c_6 \int_\Omega u^{\alpha+1} +c_7 \qquad \text{for all }t\in (0,T_{\rm max}),
    \end{align*}
    with $c_7=c_5+\lambda c_3$. This, together with Lemma \ref{L1}[\eqref{L1-1}] and Lemma \ref{ODI} implies that  
    \begin{align*}
        \int_\Omega v^p(\cdot,t) + \lambda \int_\Omega w^r(\cdot,t) \leq c_8\qquad \text{for all }t\in (0,T_{\rm max}),
    \end{align*}
    for some $c_8>0$ independent of $\mu $ since $\mu>1$. The proof is now complete.
\end{proof}

We are now in a position to prove the central result of this section: a uniform-in-time $L^\infty$ bound for $w$.

\begin{lemma} \label{wL}
    Let $\gamma=1$, $\alpha > \frac{n-2}{4}$ with $n \geq 2$, $k=0$, $\chi \in \mathbb{R}$ and $\mu>0$. There exists $p> \frac{n}{2}$ and $C_1>0$  such that 
    \begin{align} \label{wL-1}
        \int_\Omega v^p(\cdot,t ) \leq C_1 \qquad \text{for all }t\in (0,T_{\rm max}).
    \end{align}
    As a consequence, there exists $C_2>0$ fulfilling
    \begin{align} \label{wL-2}
        \left \| w(\cdot,t) \right \|_{L^\infty(\Omega)} \leq C_2 \qquad \text{for all }t\in (0,T_{\rm max}).
    \end{align}
    Moreover, if $\mu>1$ then the constants $C_1$ and $C_2$ are independent of $\mu$.
\end{lemma}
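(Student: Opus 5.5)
The plan is to bootstrap the integrability exponent of $v$ by iterating Lemma \ref{v}, starting from the $L^1$ bound. By Lemma \ref{L1}, \eqref{L1-1}, we have $\sup_t\int_\Omega v\le C_1$ with $C_1$ independent of $\mu$. This is the hypothesis \eqref{v-1} of Lemma \ref{v} with $p_0=1\in[1,\frac n2]$, since $n\ge2$. Set $\theta:=\frac{4\alpha+2}{n}$. The assumption $\alpha>\frac{n-2}{4}$ is exactly $\theta>1$.

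Fix a factor $\beta\in(1,\theta)$ and define $p_{j+1}:=\beta p_j$ with $p_0=1$. Then $p_{j+1}<\theta p_j$. As long as $p_j\le\frac n2$, Lemma \ref{v} gives $\sup_t\int_\Omega v^{p_{j+1}}\le C$, with a constant independent of $\mu$ for $\mu>1$. Since $\beta>1$, after finitely many steps (about $\log(n/2)/\log\beta$) we reach an index $J$ with $p_J\le \frac n2<\beta p_J=p_{J+1}$. We then apply Lemma \ref{v} once more with $p_0=p_J\in[1,\frac n2]$ to get a uniform $L^{p}$ bound for $v$ with $p:=p_{J+1}>\frac n2$. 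This is \eqref{wL-1}.

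There is one subtlety at the final step. If some $p_j$ equals $\frac n2$ exactly, the lemma still applies because $p_0=\frac n2$ is allowed. If instead the threshold is overshot, we simply use the last admissible $p_J$. Since the number of iterations is finite and each step preserves $\mu$-independence when $\mu>1$, the final constant $C_1$ has the same property.

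For \eqref{wL-2}, we apply Lemma \ref{C52.Para-Reg} with $j=2$ to the third equation $w_t=\Delta w-w+v$, taking $a=1$ and $f=v$. The source satisfies $\|f(\cdot,t)\|_{L^p}\le C_1^{1/p}$ with $p>\frac n2$, so the lemma allows $q_2=\infty$. This yields $\|w(\cdot,t)\|_{L^\infty}\le C_2$, and $C_2$ depends only on $p$, $M$ and the initial data, hence is independent of $\mu$ for $\mu>1$.

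The main obstacle is confined to bookkeeping. We must check that each iterate stays in the admissible range $[1,\frac n2]$ when it serves as a base exponent, and that a strict gain by a factor bounded below by $\beta>1$ occurs at every step. The latter is what prevents the iteration from stalling before the exponent crosses $\frac n2$, and it is guaranteed precisely by $\alpha>\frac{n-2}{4}$.
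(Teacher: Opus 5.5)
Your proposal is correct and matches the paper's proof. The paper also iterates Lemma~\ref{v} from the $L^1$ bound of Lemma~\ref{L1}, using the specific geometric factor $\frac{1}{2}\bigl(\frac{4\alpha+2}{n}+1\bigr)\in\bigl(1,\frac{4\alpha+2}{n}\bigr)$ (your arbitrary $\beta\in(1,\theta)$ is only a cosmetic generalization), stops at the first exponent exceeding $\frac{n}{2}$, and then applies Lemma~\ref{C52.Para-Reg} with $j=2$ to the $w$-equation to get the $L^\infty$ bound, keeping $\mu$-independence for $\mu>1$ exactly as you do.
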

\begin{proof}
  From Lemma \ref{L1}, we have that 
  \begin{align*}
      \int_\Omega v(\cdot,t) \leq c_0 \qquad \text{for all }t\in (0,T_{\rm max})
  \end{align*}
  where $c_0>0$ independent of $\mu$. In view of Lemma \ref{v}, it follows that 
  \begin{align*}
      \int_\Omega v^{p_1}(\cdot,t) \leq c_1  \qquad \text{for all }t\in (0,T_{\rm max}),
  \end{align*}
  where $p_1= \frac{4\alpha+2}{2n}+\frac{1}{2} \in \left (1, \frac{4\alpha+2}{n} \right )$ and $c_1>0$ independent of $\mu$ if $\mu>1$. Letting $p_l= \left ( \frac{4\alpha+2}{2n}+\frac{1}{2} \right )^l $ where $l\geq 1$.  By using induction and Lemma \ref{v}, we deduce that if $p_{l-1} \leq \frac{n}{2}$ and 
  \begin{align*}
        \int_\Omega v^{p_{l-1}}(\cdot,t) \leq c_{l-1} \qquad \text{for all }t\in (0,T_{\rm max}).
    \end{align*}
    with $c_{l-1}>0$ independent of $\mu $ if $\mu>1$, then
    \begin{align*}
        \int_\Omega v^{p_{l}}(\cdot,t) \leq c_{l} \qquad \text{for all }t\in (0,T_{\rm max}).
    \end{align*}
    with $c_{l}>0$ independent of $\mu $ if $\mu>1$. Since $p_l \to \infty$ as $l \to \infty$, there exists $l_0 \in \mathbb{N}$ such that $p_{l_0-1}\leq  \frac{n}{2}$ and $p_{l_0}> \frac{n}{2}$ satisfying  
      \begin{align*}
        \int_\Omega v^{p_{l_0}}(\cdot,t) \leq c_{l_0} \qquad \text{for all }t\in (0,T_{\rm max}),
    \end{align*}
    where $c_{l_0}>0$ independent of $\mu $ if $\mu>1$. Therefore, \eqref{wL-1} is proved. In view of Lemma \ref{C52.Para-Reg} and \eqref{wL-1}, we  obtain 
    \begin{align*}
         \left \|w(\cdot,t) \right \|_{L^\infty(\Omega)} \leq C \qquad \text{for all }t\in (0,T_{\rm max}),
    \end{align*}
   where $C>0$ independent of $\mu$ if $\mu>1$. The proof is now complete. 
    
\end{proof}

\section{Global boundedness for fully parabolic system} \label{S4}

In this section, we establish several essential estimates---including an $L\ln L$ estimate in two spatial dimensions and $L^p$ estimates for $p>1$ in arbitrary dimensions---that are needed for the proof of the first main theorem. We begin by deriving an $L\ln L$ bound for $u$ and an $L^2$ bound for $\nabla v$, which are presented in the following lemma.

\begin{lemma} \label{LlnL}
    Let $n =2$, $\gamma=1$, $\alpha=1$, $k\in [0,1)$, $\chi \in \mathbb{R}$ and $\mu>0$. There exists $C>0$ such that 
    \begin{align}
        \int_\Omega u(\cdot,t) \ln (u(\cdot,t) +e)+ \int_\Omega |\nabla v(\cdot,t)|^2 \leq C   \qquad \text{for all }t\in (0,T_{\rm max}).
    \end{align}
\end{lemma}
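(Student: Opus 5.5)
The plan is to first obtain a uniform bound on $\|w\|_{L^\infty(\Omega)}$, and then run a coupled energy estimate for $\int_\Omega u\ln(u+e)+\frac12\int_\Omega|\nabla v|^2$. With $w$ bounded, the chemotactic and infection terms are both controlled by $\int_\Omega u^2$. That quantity is in turn dominated by the superquadratic contribution $\mu\int_\Omega u^2\ln^{1-k}(u+e)$ that the damping produces once it is tested against $\ln(u+e)$, because $1-k>0$.

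\textbf{Step 1 ($L^\infty$ bound for $w$).} Lemma \ref{wL} is stated for $k=0$, but its proof uses the logistic term only through the space–time bound $\int_t^{t+\tau}\int_\Omega u^{1+\alpha}$ that feeds Lemma \ref{v} and Lemma \ref{ODI}. Fix any $\alpha'\in(0,1)$. Since $n=2$, we have $\alpha'>\frac{n-2}{4}=0$. Moreover $u^{1+\alpha'}\le c\,\frac{u^2}{\ln^k(u+e)}+c$ for some $c>0$, so \eqref{L1-2} yields $\sup_t\int_t^{t+\tau}\int_\Omega u^{1+\alpha'}<\infty$. Repeating the proofs of Lemmas \ref{v} and \ref{wL} with $\alpha'$ in place of $\alpha$ then gives $\|w(\cdot,t)\|_{L^\infty(\Omega)}\le K$ for all $t\in(0,T_{\rm max})$.

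\textbf{Step 2 (differential inequalities).} Testing the first equation of \eqref{1} by $\ln(u+e)+\frac{u}{u+e}$, i.e.\ by the derivative of $u\ln(u+e)$, gives
\[
\frac{d}{dt}\int_\Omega u\ln(u+e)\le -c\int_\Omega\frac{|\nabla u|^2}{u+e}+|\chi|\,c\int_\Omega |\nabla u||\nabla v|-\mu\int_\Omega u^2\ln^{1-k}(u+e)+C\int_\Omega(u+1)\ln(u+e).
\]
Here the term $-uw$ is dropped by its sign and $\kappa\le\kappa_*$ is used. The cross term is treated after integrating by parts as $\int_\Omega \phi(u)\,\Delta v$ with $|\phi(u)|\le Cu$, and Young's inequality bounds it by $\frac14\int_\Omega|\Delta v|^2+C\int_\Omega u^2$. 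From the second equation we have
\[
\frac12\frac{d}{dt}\int_\Omega|\nabla v|^2+\int_\Omega|\Delta v|^2+\int_\Omega|\nabla v|^2=-\int_\Omega uw\Delta v\le\frac14\int_\Omega|\Delta v|^2+K^2\int_\Omega u^2 .
\]

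\textbf{Step 3 (absorption and ODI).} This is the key step. Every bad term, namely $C\int_\Omega u^2$, $C\int_\Omega u\ln(u+e)$, and the extra $\int_\Omega u\ln(u+e)$ added so that the functional appears on the left, has the form $\int_\Omega g(u)$ with $g(s)=o\bigl(s^2\ln^{1-k}(s+e)\bigr)$ as $s\to\infty$. This is exactly where $k<1$ is needed. Hence for any $\varepsilon>0$,
\[
\int_\Omega g(u)\le \varepsilon\int_\Omega u^2\ln^{1-k}(u+e)+C_\varepsilon .
\]
Choosing $\varepsilon$ small relative to $\mu$ lets the damping absorb these terms. Writing $y=\int_\Omega u\ln(u+e)+\frac12\int_\Omega|\nabla v|^2$, we then get $y'+y\le C$, and Lemma \ref{ODI} (or plain Gronwall) gives the uniform bound claimed in Lemma \ref{LlnL}.

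The main obstacle I expect is Step 1: checking carefully that the iteration in Lemmas \ref{v}--\ref{wL} goes through when the damping carries the logarithmic factor with $k>0$. This should work by passing to the slightly weaker exponent $\alpha'<1$, since in two dimensions any positive exponent suffices.
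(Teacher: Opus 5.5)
Your proposal is correct and follows essentially the same route as the paper. The paper also first bounds $\|w\|_{L^\infty(\Omega)}$ by feeding the space--time bound on $\int_t^{t+\tau}\int_\Omega u^{3/2}$ (deduced from \eqref{L1-2}) into Lemma \ref{wL} with $\alpha=\frac12$, and then closes a Gronwall argument for $\int_\Omega u\ln u+\frac12\int_\Omega|\nabla v|^2$ by absorbing $c\int_\Omega u^2$ into $\frac{\mu}{2}\int_\Omega u^2\ln^{1-k}(u+e)$. Your variant differs only cosmetically: the test function for $\int_\Omega u\ln(u+e)$ is bounded below by $1$, so the $-uw$ term drops by sign and the final conversion from $u\ln u$ is unnecessary, at the small cost of handling the cross term through $\phi(u)$ with $|\phi(u)|\le 2u$.
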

\begin{proof}
    Multiplying the first equation of \eqref{1} by $\ln u+1$ and integrating by parts yields
    \begin{align} \label{LlnL.1}
        \frac{d}{dt}\int_\Omega u \ln u &= \int_\Omega (\ln u+1) \left ( \Delta u - \chi \nabla \cdot (u \nabla v) +\kappa-u-uw - \frac{\mu u^2}{\ln^k(u+e)} \right )\notag\\
        &= - \int_\Omega \frac{|\nabla u|^2}{u} -\chi \int_\Omega u \Delta v - \int_\Omega u \ln u -\int_\Omega u -\int_\Omega w u \ln u \notag \\
        &\quad +\int_\Omega \kappa (\ln u+1)-\int_\Omega uw -\mu \int_\Omega \frac{u^2(\ln u+1)}{\ln^k(u+e)} \notag \\
        &\leq - \int_\Omega \frac{|\nabla u|^2}{u} -\chi \int_\Omega u \Delta v - \int_\Omega u \ln u - \mu \int_\Omega \frac{u^2 \ln u}{\ln^k(u+e)} +c_1 \qquad \text{for all }t\in (0,T_{\rm max}),
    \end{align}
    where $c_1>0$ and the last inequality holds due to \eqref{L1-1} and
    \begin{align*} 
         -\int_\Omega w u \ln u  \leq \frac{1}{e} \int_\Omega w,
    \end{align*}
    and 
    \begin{align*}
        \int_\Omega \kappa (\ln u+1) \leq \kappa_* \int_\Omega u.
    \end{align*}
     One can verify that there exists $c_2>0$ such that 
    \begin{align} \label{LlnL.3}
        - \mu \int_\Omega \frac{u^2 \ln u}{\ln^k(u+e)} \leq -\frac{\mu }{2} \int_\Omega u^2 \ln^{1-k}(u+e) +c_2.
    \end{align}
    Indeed, there exist $A_1>0$, $A_2>0$ and $A_3>0$ such that 
    \begin{align} \label{LlnL.4}
         \int_{\left\{ u \leq e \right \}} \frac{u^2 \ln u}{\ln^k(u+e)} \geq -A_1
    \end{align}
    and 
    \begin{align} \label{LlnL.5}
        \int_{ \left \{ u \leq e \right \}} u^2 \ln^{1-k}(u+e) \leq A_2,
    \end{align}
    as well as 
    \begin{align} \label{LlnL.6}
        \int_{ \left \{ u>e \right \}} \frac{u^2 \ln u}{\ln^k(u+e)} &\geq  \int_{ \left \{ u > e \right \}} u^2 \ln^{1-k}(u+e) -\ln 2 \int_{ \left \{ u > e \right \}}\frac{u^2}{\ln^k(u+e)} \notag \\
        &\geq \frac{1}{2}\int_{ \left \{ u > e \right \}} u^2 \ln^{1-k}(u+e)+A_3.
    \end{align} 
    Combining \eqref{LlnL.4}, \eqref{LlnL.5} and \eqref{LlnL.6}, we obtain \eqref{LlnL.3}. By direct calculations, we have
    \begin{align}\label{LlnL.7}
        \frac{1}{2} \frac{d}{dt} \int_\Omega |\nabla v|^2 &= \int_\Omega \nabla v \cdot \nabla (\Delta v -v  +uw) \notag \\
        &=-\int_\Omega |\Delta v|^2 -\int_\Omega |\nabla v|^2 - \int uw \Delta v.
    \end{align}
   From Lemma \ref{L1}[\eqref{L1-2}], we deduce that there exists $C>0$ such that 
   \begin{align*}
      \int_{t}^{t+\tau } \int_\Omega u^{\frac{3}{2}} \leq C  \qquad \text{for all }t\in (0,T_{\rm max} -\tau ),
   \end{align*}
   where $\tau = \min \left \{1, \frac{T_{\rm max}}{2} \right \}$. Applying this and Lemma \ref{wL}[\eqref{wL-2}] with $\alpha=\frac{1}{2}$, we infer that  
   \begin{align*}
       \sup_{t \in (0,T_{\rm max})} \left \|w(\cdot,t) \right \|_{L^\infty(\Omega)}<\infty.
   \end{align*}
   Using this and Young's inequality, it follows that 
   \begin{align}\label{LlnL.8}
       - \int uw \Delta v -\chi \int_\Omega u \Delta v &\leq c_3 \int_\Omega u |\Delta v| \notag \\
       &\leq \frac{1}{2}\int_\Omega |\Delta v|^2 + \frac{c_3^2}{2} \int_\Omega u^2, 
   \end{align}
   where $c_3= |\chi|+ \sup_{t \in (0,T_{\rm max})} \left \|w(\cdot,t) \right \|_{L^\infty(\Omega)}$. Collecting \eqref{LlnL.1}, \eqref{LlnL.3}, \eqref{LlnL.7} and \eqref{LlnL.8}, we arrive at 
   \begin{align*}
       \frac{d}{dt} \left \{  \int_\Omega u \ln u + \frac{1}{2} \int_\Omega |\nabla v|^2 \right \}+ \int_\Omega u \ln u + \frac{1}{2} \int_\Omega |\nabla v|^2 &\leq c_1+c_2+\frac{c_3^2}{2}\int_\Omega u^2 -\frac{\mu}{2}\int_\Omega u^2 \ln^{1-k}(u+e) \notag \\
       &\leq c_4 \qquad \text{for all }t\in (0,T_{\rm max}),
   \end{align*}
   where $c_4>0$. Applying Gronwall's inequality to this, we deduce that 
   \begin{align}\label{LlnL.9'}
       \int_\Omega u(\cdot,t ) \ln u(\cdot,t ) +\frac{1}{2}\int_\Omega |\nabla v(\cdot,t)|^2 \leq  c_5:= \max \left \{ c_4, \int_\Omega u_0 \ln u_0 +\frac{1}{2}\int_\Omega |\nabla v_0|^2 \right \}.
   \end{align}
   It is clear that 
   \begin{align}\label{LlnL.9}
       \int_\Omega u \ln (u+e) &= \int_{\left \{ u \leq e \right \}  } u \ln (u+e)+ \int_{\left \{ u > e \right \}  } u \ln (u+e) \notag \\
       &\leq e\ln(2e)|\Omega| +  \int_{\left \{ u > e \right \}  } u \ln (2u) \notag \\
       &\leq \int_{\left \{ u > e \right \}  } u \ln u + c_6,
   \end{align}
   where $c_6>0$. Moreover, we have
   \begin{align}\label{LlnL.10}
       \int_{ \left \{ u \leq e \right \}} u \ln u \geq -\frac{|\Omega|}{e}.
   \end{align}
    Combining \eqref{LlnL.9'}, \eqref{LlnL.9} and \eqref{LlnL.10}, it follows that 
    \begin{align*}
        \int_\Omega u(\cdot, t) \ln (u(\cdot,t)+e) &\leq \int_\Omega u(\cdot, t) \ln u(\cdot, t) + c_6+ \frac{|\Omega|}{e} \notag \\
        &\leq c_7 \qquad \text{for all }t\in (0,T_{\rm max}),
    \end{align*}
    where $c_7= c_5+c_6+\frac{|\Omega|}{e}$. The proof is now complete.
\end{proof}

The next result furnishes an elementary yet useful estimate, which will subsequently be employed in deriving $L^p$ bounds for $p>1$ in two spatial dimensions.
\begin{lemma} \label{C5.l}
 Let $n\geq 2$, $\gamma=1$, $\alpha>\frac{n-2}{4}$, $k=0$, $\chi \in \mathbb{R}$ and $\mu>0$. For any $p >1$, there exist positive constants $C_1$ and $C_2$ such that 
    \begin{align} \label{C5.l-1}
        \frac{d}{dt}\int_\Omega |\nabla v|^{2p}+ \int_\Omega |\nabla v|^{2p} \leq -\frac{p-1}{p}\int_\Omega \left | \nabla |\nabla v|^p \right |^2+ C_1 \int_\Omega u^2|\nabla v|^{2p-2} +C_2 \quad \text{for all }t\in (0,T_{\rm max}).
    \end{align}
   
\end{lemma}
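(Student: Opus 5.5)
We will follow the standard computation for $\frac{d}{dt}\int_\Omega |\nabla v|^{2p}$ in Keller--Segel type systems, combined with the $L^\infty$ bound for $w$ from Lemma \ref{wL}. That bound is available here because $\alpha>\frac{n-2}{4}$, $k=0$ and $\gamma=1$, and Lemma \ref{L1}\eqref{L1-2} supplies the needed space-time integrability of $u^{1+\alpha}$.

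\textbf{Step 1 (pointwise identity).} Differentiate $|\nabla v|^2$ using the second equation of \eqref{1} with $\gamma=1$, and use $\Delta|\nabla v|^2 = 2\nabla v\cdot\nabla\Delta v + 2|D^2 v|^2$. This gives
\begin{align*}
\big(|\nabla v|^2\big)_t = \Delta |\nabla v|^2 - 2|D^2 v|^2 - 2|\nabla v|^2 + 2\nabla v\cdot \nabla(uw).
\end{align*}
Multiply by $p|\nabla v|^{2p-2}$ and integrate. The Laplacian term yields $-\frac{p(p-1)}{2}\int_\Omega |\nabla v|^{2p-4}|\nabla|\nabla v|^2|^2 = -\frac{2(p-1)}{p}\int_\Omega |\nabla|\nabla v|^p|^2$, plus a boundary integral $p\int_{\partial\Omega}|\nabla v|^{2p-2}\partial_\nu |\nabla v|^2$. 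The damping term gives $-2p\int_\Omega|\nabla v|^{2p}$, which leaves room to keep $+\int_\Omega |\nabla v|^{2p}$ on the left with some surplus.

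\textbf{Step 2 (boundary term).} This is the step I expect to be the main obstacle. Use the known estimate $\partial_\nu|\nabla v|^2\le c_\Omega|\nabla v|^2$ on $\partial\Omega$, together with a trace inequality of the form
\begin{align*}
\int_{\partial\Omega} \varphi^2 \le \eta \int_\Omega|\nabla\varphi|^2 + C_\eta\int_\Omega \varphi^2,
\end{align*}
applied to $\varphi=|\nabla v|^p$. Choosing $\eta$ small absorbs part of the gradient term, leaving $-\frac{p-1}{p}\int_\Omega|\nabla|\nabla v|^p|^2$. The remaining lower-order term $\int_\Omega |\nabla v|^{2p}$ must then be controlled. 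Use Gagliardo--Nirenberg (Lemma \ref{GN}) with a low-norm bound on $\nabla v$: $\|\nabla v\|_{L^1}$ is bounded, for instance via Lemma \ref{C52.Para-Reg}, since $\|uw\|_{L^1}\le C\|u\|_{L^1}$ by Lemma \ref{wL} and Lemma \ref{L1}. This gives $\int_\Omega |\nabla v|^{2p}\le \varepsilon\int_\Omega|\nabla|\nabla v|^p|^2 + C$, which closes this step.

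\textbf{Step 3 (cross term).} Integrate the cross term by parts:
\begin{align*}
2p\int_\Omega |\nabla v|^{2p-2}\nabla v\cdot\nabla(uw) = -2p\int_\Omega uw\,|\nabla v|^{2p-2}\Delta v - 2p\int_\Omega uw\,\nabla v\cdot\nabla|\nabla v|^{2p-2}.
\end{align*}
No boundary term appears since $\partial_\nu v=0$. Using $\|w\|_{L^\infty}\le C_2$ from Lemma \ref{wL}, $|\Delta v|\le\sqrt n|D^2v|$, and $|\nabla|\nabla v|^{2p-2}|\le 2(p-1)|\nabla v|^{2p-3}|D^2v|$, both pieces are bounded by $c\int_\Omega u|\nabla v|^{2p-2}|D^2 v|$. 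By Young's inequality this is at most
\begin{align*}
p\int_\Omega |\nabla v|^{2p-2}|D^2v|^2 + C_1\int_\Omega u^2|\nabla v|^{2p-2}.
\end{align*}
The first term is absorbed by the $-2p\int_\Omega|\nabla v|^{2p-2}|D^2v|^2$ term from Step 1. Collecting Steps 1--3 yields \eqref{C5.l-1}.
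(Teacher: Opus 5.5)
Your argument is correct and follows the paper's outline. You use the same identity for $\frac{d}{dt}\int_\Omega|\nabla v|^{2p}$, and the same boundary estimate $\partial_\nu|\nabla v|^2\le c|\nabla v|^2$ followed by a trace inequality. The paper applies $W^{1,1}(\Omega)\hookrightarrow L^1(\partial\Omega)$ to $|\nabla v|^{2p}$ and then Young, which amounts to your $\eta$-trace inequality for $|\nabla v|^p$. You also use the same integration by parts of the cross term, with $\|w\|_{L^\infty}$ taken from Lemma \ref{wL}. The paper absorbs the piece containing $\nabla|\nabla v|^{p}$ into the $|\nabla|\nabla v|^p|^2$ dissipation rather than into $|D^2v|^2$ as you do, but that difference is immaterial.

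The substantive difference is how the leftover $\int_\Omega|\nabla v|^{2p}$ is controlled. The paper feeds Gagliardo--Nirenberg with the $L^2$ bound on $\nabla v$ from Lemma \ref{LlnL}, which is only proved for $n=2$, $\alpha=1$. You instead use a uniform $L^1$ bound on $\nabla v$, obtained from Lemma \ref{C52.Para-Reg} with $f=uw$ and $\|uw\|_{L^1}\le\|w\|_{L^\infty}\|u\|_{L^1}$. You then apply Lemma \ref{GN} with $q=1/p<1$, which that lemma permits, and $a=\frac{p-1/2}{p+1/n-1/2}<1$ makes the Young step work.

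Your route is therefore self-contained and dimension-independent: it proves the lemma in the full generality in which it is stated ($n\ge2$, $\alpha>\frac{n-2}{4}$, $k=0$). The paper's justification, as written, only covers the two-dimensional case in which the lemma is actually used (Lemma \ref{Lp''}). There the stronger $L^2$ bound is already available, which makes the paper's version a one-line appeal.

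One harmless slip: the Laplacian term gives
\begin{align*}
-p(p-1)\int_\Omega|\nabla v|^{2p-4}\big|\nabla|\nabla v|^2\big|^2=-\frac{4(p-1)}{p}\int_\Omega\big|\nabla|\nabla v|^p\big|^2,
\end{align*}
not $-\frac{2(p-1)}{p}\int_\Omega|\nabla|\nabla v|^p|^2$. The correct coefficient only leaves you more room for the trace and Gagliardo--Nirenberg absorptions.
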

\begin{proof}
Direct calculations show that 
\begin{align} \label{C5.l.1}
    \frac{1}{p} \frac{d}{dt}\int_\Omega |\nabla v|^{2p} &= 2\int_\Omega |\nabla v|^{2p-2} \nabla v \cdot \nabla \left ( \Delta v -  v + uw \right ) \notag \\
    &= \int_\Omega |\nabla v|^{2p-2} \left ( \Delta |\nabla v|^2- 2|D^2 v|^2 \right )- 2\int_\Omega |\nabla v|^{2p}+2 \int_\Omega |\nabla v|^{2p-2} \nabla v \cdot \nabla (uw) \notag \\
    &= -\frac{4(p-1)}{p^2} \int_\Omega  |\nabla |\nabla v|^p|^2 - 2 \int_\Omega |\nabla v|^{2p-2}|D^2 v|^2- 2 \int_\Omega |\nabla v|^{2p}\notag \\
    &\quad+2 \int_\Omega |\nabla v|^{2p-2} \nabla v \cdot \nabla (uw) + \int_{\partial \Omega} |\nabla v|^{2p-2}\frac{\partial |\nabla v|^2}{\partial \nu},
\end{align}
where we have used the identity that $\Delta |\nabla v|^2 = 2\nabla v \cdot \nabla \Delta v +2 |D^2 v|^2$. We apply Lemma 4.2 in \cite{Souplet-13} to obtain that 
   {\begin{align*}
       \frac{\partial |\nabla v|^2}{\partial \nu } \leq c_1 |\nabla v|^2 \quad \text{on } \partial \Omega,
   \end{align*}}
   for some $c_1>0$. This, together with Trace Sobolev's embedding theorem, $W^{1,1}(\Omega) \to L^1(\partial \Omega)$, {and} Young's inequality entails that
   \begin{align}\label{C5.l.2}
       \int_{\partial \Omega}|\nabla v|^{2(p-1)} \frac{\partial |\nabla v|^2}{\partial \nu} &\leq c_1  \int_{\partial \Omega}|\nabla v|^{2p} \notag \\
       &\leq c_2 \int_\Omega |\nabla v|^{2p}+ c_2 \int_\Omega |\nabla v|^{2p-2} |\nabla |\nabla v|^2| \notag \\
       &\leq \frac{p-1}{p^2}\int_\Omega |\nabla |\nabla v|^p|^2+c_3  \int_\Omega |\nabla v|^{2p},
   \end{align}
   where $c_2>0$ and $c_3>0$. From Lemma \ref{LlnL}, we have that $\int_\Omega |\nabla v|^2$ is bounded in $(0,T_{\rm max})$. This, in conjunction with Gagliardo–Nirenberg interpolation inequality and Young's inequality {implies} that 
   \begin{align}\label{C5.l.3}
       \left (c_3+\frac{1}{p} \right  )  \int_\Omega |\nabla v|^{2p} \leq \frac{p-1}{p^2}\int_\Omega |\nabla |\nabla v|^p|^2+c_4,
   \end{align}
   for some $c_4>0$. By using integration by parts and Lemma \ref{wL}[\eqref{wL-2}], we derive that
   \begin{align}\label{C5.l.4}
       2 \int_\Omega |\nabla v|^{2p-2} \nabla v \cdot \nabla (uw)&= -2 \int_\Omega uw |\nabla v|^{2p-2}\Delta v  - \frac{4(p-1) }{p} \int_\Omega  uw|\nabla v|^{p-2} \nabla v \cdot \nabla |\nabla v|^p  \notag \\
       &\leq \frac{2}{n} \int_\Omega |\nabla v|^{2p-2}|\Delta v|^2 + \frac{p-1}{p^2}\int_\Omega |\nabla |\nabla v|^p|^2+c_5 \int_\Omega u^2w^2|\nabla v|^{2p-2} \notag \\
       &\leq 2\int_\Omega |\nabla v|^{2p-2} |D^2 v|^2 + \frac{p-1}{p^2}\int_\Omega |\nabla |\nabla v|^p|^2+c_6 \int_\Omega u^2|\nabla v|^{2p-2}, 
   \end{align}
   where $c_5>0$ and $c_6= c_5 \sup_{t\in (0,T_{\rm max})} \left \| w(\cdot,t) \right \|_{L^\infty(\Omega)} $. Collecting from \eqref{C5.l.1} to \eqref{C5.l.4}, {it follows that}
   \begin{align*}
       \frac{d}{dt} \int_\Omega |\nabla v|^{2p}+ \int_\Omega |\nabla v|^{2p} + \frac{p-1}{p}\int_\Omega |\nabla |\nabla v|^p|^2 \leq c_6p \int_\Omega u^2|\nabla v|^{2p-2}+c_4p \quad \text{for all } t\in (0,T_{\rm max}), 
   \end{align*}
   which completes the proof.
\end{proof}

We now derive uniform-in-time $L^p$ bounds for $u$ and $|\nabla v|^2$ for any $p>1$ in two spatial dimensions. It is worth emphasizing that this result does not require any largeness assumption on $\mu$, thanks to the uniform-in-time $L\ln L$ bound for $u$ established in Lemma \ref{LlnL}..

\begin{lemma} \label{Lp''}
    Let $n=2$, $\gamma=1$, $\alpha=1$, $k \in [0,1)$, $\chi \in \mathbb{R}$ and $\mu>0$. For any $p>1$, there exists $C>0$ such that 
    \begin{align}
        \int_\Omega u^p(\cdot,t) +\int_\Omega |\nabla v(\cdot,t)|^{2p} \leq C \qquad \text{for all }t\in (0,T_{\rm max}). 
    \end{align}
\end{lemma}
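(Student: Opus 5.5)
We will combine the $L^p$ evolution of $u$ with the $|\nabla v|^{2p}$ functional of Lemma \ref{C5.l}, using two facts already available in $n=2$: $w$ is bounded in $L^\infty$, and $u$ is bounded in $L\ln L$ (Lemma \ref{LlnL}).

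\emph{Step 1: $w$ is bounded.} As in the proof of Lemma \ref{LlnL}, Lemma \ref{L1}[\eqref{L1-2}] gives
\[
\sup_t\int_t^{t+\tau}\int_\Omega u^{3/2}<\infty .
\]
Lemmas \ref{v} and \ref{wL} with $\alpha=\tfrac12>\tfrac{n-2}{4}=0$ then give $\|w\|_{L^\infty}\le c$. The same argument shows that Lemma \ref{C5.l} holds in the present setting. The only place where $k=0$ and $\alpha>\frac{n-2}{4}$ enter that lemma is through the $L^\infty$ bound on $w$ and the $L^2$ bound on $\nabla v$, and both are available here.

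\emph{Step 2: the $u$-equation.} We multiply the $u$-equation by $pu^{p-1}$, integrate by parts, and drop the favorable terms $-u$ and $-uw$. This gives
\[
\frac{d}{dt}\int_\Omega u^p+\frac{4(p-1)}{p}\int_\Omega|\nabla u^{p/2}|^2\le p(p-1)|\chi|\int_\Omega u^{p-1}|\nabla u|\,|\nabla v| -\mu p\int_\Omega\frac{u^{p+1}}{\ln^k(u+e)}+c\int_\Omega u^{p-1}.
\]
By Young's inequality the cross term is at most
\[
\frac{p-1}{p}\int_\Omega|\nabla u^{p/2}|^2+c\int_\Omega u^p|\nabla v|^2 .
\]
We add this to \eqref{C5.l-1}. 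The remaining mixed terms are $\int u^p|\nabla v|^2$ and $\int u^2|\nabla v|^{2p-2}$. By Young's inequality both are bounded by
\[
\eta\int_\Omega|\nabla v|^{2p+2}+c_\eta\int_\Omega u^{p+1}.
\]

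\emph{Step 3: absorbing the mixed terms (main obstacle).} The damping only controls $u^{p+1}/\ln^k(u+e)$, so for $k>0$ the term $c_\eta\int u^{p+1}$ cannot be absorbed directly. This is where the two-dimensional structure must be used.

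\emph{Gradient terms.} First, by the two-dimensional Gagliardo--Nirenberg inequality with $\|\nabla v\|_{L^2}$ bounded,
\[
\int_\Omega|\nabla v|^{2p+2}\le c\int_\Omega|\nabla|\nabla v|^p|^2\int_\Omega|\nabla v|^2+c ,
\]
so the $\eta\int|\nabla v|^{2p+2}$ terms are absorbed by the dissipation in \eqref{C5.l-1}.

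\emph{$u^{p+1}$ terms.} Second, we apply Lemma \ref{C52.ILGN} with $m=p$, $\xi=0$, $\sigma=1$, using the $L\ln L$ and $L^1$ bounds:
\[
\int_\Omega u^{p+1}\le\varepsilon\,c\int_\Omega|\nabla u^{p/2}|^2+c .
\]
Choosing $\varepsilon$ small, this is absorbed by the $u$-dissipation. Thus the damping term is not even needed beyond nonpositivity; for $k>0$ it enters only through Lemma \ref{LlnL}.

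\emph{Conclusion.} Finally, GN and the $L\ln L$ bound give $\int u^p\le\frac12\int|\nabla u^{p/2}|^2+c$, which supplies a linear absorption term. We obtain
\[
y'+y\le c\qquad\text{for } y=\int_\Omega u^p+\int_\Omega|\nabla v|^{2p},
\]
and Gronwall's inequality finishes the proof.
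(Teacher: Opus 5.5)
Your proposal is correct and follows essentially the same route as the paper. You bound $w$ in $L^\infty$ via the $\alpha=\tfrac12$ reduction and add the $u^p$-estimate to Lemma \ref{C5.l}. You split the mixed terms by Young into $\int_\Omega|\nabla v|^{2p+2}$, absorbed via Gagliardo--Nirenberg and the $L^2$ bound on $\nabla v$, and $\int_\Omega u^{p+1}$, absorbed via Lemma \ref{C52.ILGN} and the $L\ln L$ bound, then close with Gronwall.

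The differences are cosmetic. The paper gets the linear term $-\int_\Omega u^p$ from the $-u$ decay term rather than from interpolation; in your version the coefficient $\tfrac12$ must be taken small, as Lemma \ref{C52.ILGN} permits, since the remaining dissipation $\tfrac{3(p-1)}{p}$ is small for $p$ near $1$. Also, you explicitly justify using Lemma \ref{C5.l} for $k\in(0,1)$, which the paper invokes without comment.
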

\begin{proof}
    Testing the first equation of \eqref{1} by $pu^{p-1}$ with $p>1$ and applying Young's inequality, there exists $c_1>0$ such that 
    \begin{align}\label{Lp''.1}
       \frac{d}{dt} \int_\Omega u^p &= p\int_\Omega u^{p-1} \left ( \Delta u - \chi \nabla \cdot (u \nabla v) +\kappa-u-uw - \frac{\mu u^2}{\ln^k(u+e)} \right )\notag\\ 
       &= -p(p-1) \int_\Omega u^{p-2}|\nabla u|^2 +\chi p(p-1)\int_\Omega u^{p-1} \nabla u \cdot \nabla v + p\int_\Omega u^{p-1} \kappa  \notag \\
       &\quad- p\int_\Omega u^p - p\int_\Omega u^pw - \mu p \int_\Omega \frac{u^{p+1}}{\ln^k(u+e)} \notag \\
       &\leq -\frac{p(p-1)}{2}\int_\Omega u^{p-2}|\nabla u|^2 + \frac{\chi^2p(p-1)}{2} \int_\Omega u^p |\nabla v|^2 \notag \\
       &\quad+ p\kappa_* \int_\Omega u^{p-1}   - p\int_\Omega u^p \notag \\
       &\leq - \frac{2(p-1)}{p} \int_\Omega |\nabla u^{\frac{p}{2}}|^2+ \frac{\chi^2 p(p-1)}{2} \int_\Omega u^p |\nabla v|^2 -\int_\Omega u^p +c_1 \qquad \text{for all }t\in (0,T_{\rm max}).
    \end{align}
    From Lemma \ref{C5.l}, one can find $c_2>0$ and $c_3>0$ such that
    \begin{align}\label{Lp''.2}
        \frac{d}{dt} \int_\Omega |\nabla v|^{2p}+\int_\Omega |\nabla v|^{2p} + \frac{p-1}{p} \int_\Omega |\nabla |\nabla v|^p|^2 \leq c_2 \int_\Omega u^2 |\nabla v|^{2p-2} +c_3 \qquad \text{for all }t\in (0,T_{\rm max}).
    \end{align}
    {By applying Lemma \ref{LlnL} and Lemma \ref{GN}, we obtain}
    \begin{align}\label{Lp''.10}
        \int_\Omega |\nabla v|^{2p+2} &\leq c_{4} \int_\Omega |\nabla |\nabla v|^p|^2 \int_\Omega |\nabla v|^2 +c_{4} \left (\int_\Omega |\nabla v|^2 \right )^2 \notag \\
        &\leq c_{5}\int_\Omega |\nabla |\nabla v|^p|^2 +c_{6},
    \end{align}
    where $c_{4}$, $c_{5}$, and $c_{6}$ are positive constants. In light of Young's inequality and \eqref{Lp''.10}, it follows that 
    \begin{align}\label{Lp''.11}
        \frac{\chi^2 p(p-1)}{2}\int_\Omega u^p |\nabla v|^2 + c_{2}\int_\Omega u^2 |\nabla v|^{2p-2} &\leq \frac{p-1}{pc_{5}} \int_\Omega |\nabla v|^{2p+2} +c_{7}\int_\Omega u^{p+1} \notag \\
        &\leq \frac{p-1}{p}\int_\Omega |\nabla |\nabla v|^p|^2 +c_{7}\int_\Omega u^{p+1}+c_{8},
    \end{align}
    where $c_7>0$ and $c_8>0$. From Lemma \ref{LlnL}, there exists $c_{9}>0$ such that 
     \begin{align*}
         \int_\Omega u(\cdot,t) \ln(u(\cdot,t)+e) \leq c_{9} \qquad \text{for all }t\in (0,T_{\rm max}).
     \end{align*}
     Making use of Lemma \ref{C52.ILGN} with $\varepsilon = \frac{2(p-1)}{pc_{7}c_{9}}$, {we find that} 
     \begin{align} \label{Lp''.12}
         c_{7} \int_\Omega u^{p+1} &\leq  c_{7}\varepsilon \int_\Omega |\nabla u^\frac{p}{2}|^2 \int_\Omega u \ln (u+e) + c_{7}\varepsilon \left ( \int_\Omega u \right )^p \int_\Omega u \ln (u+e) +c_{10} \notag \\
         &\leq \frac{2(p-1)}{p} \int_\Omega |\nabla u^\frac{p}{2}|^2 +c_{11},
     \end{align}
     where $c_{10}>0$ and $c_{11}>0$. Now, collecting \eqref{Lp''.1}, \eqref{Lp''.2}, \eqref{Lp''.11}, \eqref{Lp''.12}, we arrive at 
     \begin{align}\label{Lp''.13}
         \frac{d}{dt} \left \{ \int_\Omega u^p+ \int_\Omega |\nabla v|^{2p} \right \}+ \int_\Omega u^p+ \int_\Omega |\nabla v|^{2p} \leq c_{12} \qquad \text{for all }t\in (0,T_{\rm max}),
     \end{align}
     where $c_{12}=c_1+c_3+c_8+c_{11}$. Finally, we apply Gronwall's inequality to \eqref{Lp''.13} to obtain that 
     \begin{align}
         \int_\Omega u^p(\cdot,t)+ \int_\Omega |\nabla v(\cdot,t)|^{2p} \leq \max \left \{ c_{12}, \int_\Omega u_0^p + \int_\Omega |\nabla v_0|^{2p} \right \} \qquad \text{for all }t\in (0,T_{\rm max}),
     \end{align}
    which completes the proof.
\end{proof}
    
In higher dimensions, we next show that the solution component $u$ is globally bounded in $L^p$ under certain conditions on $\alpha$ and $\mu$; this is stated in the following lemma. It is worth mentioning that this result demonstrates that quadratic logistic damping can prevent blow-up in three spatial dimensions, which is the physically relevant case.
\begin{lemma} \label{Lp}
    Let $p>1$, $\gamma=1$ and $\chi \in \mathbb{R}$. If $\alpha=1$, $k=0$  and $3\leq n \leq 5$ then there exists $\mu_*= \mu_*(p)>0$ such that if $\mu>\mu_*$ then there exists $C>0$ such that
    \begin{align} \label{Lp-1}
        \int_\Omega u^p(\cdot,t) \leq C \qquad \text{for all }t\in (0,T_{\rm max}).
    \end{align}
    Moreover, if $k=0$, $\alpha>\frac{n-2}{4}$ with $n \geq 6$ and $\mu>0$, then \eqref{Lp-1} also holds.
\end{lemma}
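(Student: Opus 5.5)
We will combine the $L^p$ testing of the first equation with a maximal-regularity control of $\Delta v$ (Lemma \ref{l1}), exploiting the uniform bound $\|w\|_{L^\infty}\le C_2$ from Lemma \ref{wL}. That lemma applies in both regimes: $\alpha=1>\frac{n-2}{4}$ when $n\le 5$, and $\alpha>\frac{n-2}{4}$ by assumption when $n\ge 6$. For $n\le 5$ it is essential that $C_2$ is independent of $\mu$ for $\mu>1$.

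\textbf{Step 1: differential inequality for $\int_\Omega u^p$.} Testing the first equation by $u^{p-1}$ and integrating by parts in the cross-diffusion term gives
\[
\frac{d}{dt}\int_\Omega u^p+p\int_\Omega u^p\le (p-1)|\chi|\int_\Omega u^p|\Delta v| -\mu p\int_\Omega u^{p+\alpha}+c.
\]
Here we used $\chi(p-1)\int u^{p-1}\nabla u\cdot\nabla v=-\frac{\chi(p-1)}{p}\int u^p\Delta v$ and dropped $-p\int u^pw\le 0$. By Young's inequality,
\[
\int_\Omega u^p|\Delta v|\le \eta\int_\Omega u^{p+\alpha}+C_\eta\int_\Omega|\Delta v|^{\frac{p+\alpha}{\alpha}}.
\]
Choosing $\eta$ small relative to $\mu$ absorbs the first term. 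The remaining task is to control $\int_\Omega|\Delta v|^{q}$ with $q=\frac{p+\alpha}{\alpha}$.

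\textbf{Step 2: time-integrated estimate via Lemma \ref{l1}.} Multiply the inequality from Step 1 by $e^{qt/2}$ (after adjusting the decay rate), with $q=\frac{p+\alpha}{\alpha}$, and integrate over $(t_0,t)$. Apply Lemma \ref{l1} with $f=uw$. Since $|f|^{q}\le C_2^{q}u^{q}$, we get
\[
\int_{t_0}^t e^{\frac{qs}{2}}\int_\Omega|\Delta v|^{q}\le C\,C_2^{q}\int_{t_0}^t e^{\frac{qs}{2}}\int_\Omega u^{q}+C e^{\frac{qt_0}{2}}.
\]
Now compare exponents. For $\alpha=1$ we have $q=p+1=p+\alpha$, so the right-hand side involves $\int u^{p+1}$ with coefficient $C(p)C_2^{p+1}C_\eta$, independent of $\mu$. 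This is exactly where $\mu>\mu_*(p)$ enters: we need $\mu p$ to exceed this constant (combined with $\eta$), so that $-\mu p\int u^{p+1}$ dominates.

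For $\alpha>1$ (possible when $n\ge6$), $q=\frac{p}{\alpha}+1<p+\alpha$. Then $\int u^q\le \delta\int u^{p+\alpha}+C_\delta$, and any $\mu>0$ suffices.

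For $\alpha\le1$ (only when $n\ge6$, where $\alpha>\frac{n-2}{4}\ge1$), this case does not occur, so no further case is needed. The weight mismatch between $e^{ps}$ and $e^{qs/2}$ is handled by testing with the rate $q/2$, i.e. using $\frac{d}{dt}\int u^p+\frac q2\int u^p$ with the extra $\int u^p$ absorbed by $\int u^{p+\alpha}$.

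\textbf{Step 3: conclusion.} Integrating the weighted inequality yields $e^{qt/2}\int_\Omega u^p(t)\le \int_\Omega u^p(t_0)e^{qt_0/2}+Ce^{qt/2}$, which gives \eqref{Lp-1} on $(t_0,T_{\rm max})$. On $(0,t_0]$ the bound follows from local existence.

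\textbf{Main obstacle.} The delicate point is the $\alpha=1$ bookkeeping: the constant in Lemma \ref{l1} and $C_2$ must be independent of $\mu$, so that a large $\mu$ genuinely closes the estimate. This requires checking that the choice of $\eta$ does not reintroduce $\mu$-dependence. We would fix $\eta=1$ first, obtain a coefficient $K(p)$ independent of $\mu$, and then set $\mu_*=K(p)/p+1$.
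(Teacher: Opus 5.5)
Your proposal is correct and takes essentially the same route as the paper. Both arguments test the first equation by $u^{p-1}$ and split $u^p|\Delta v|$ by Young's inequality. Both then apply the exponentially weighted maximal-regularity estimate of Lemma \ref{l1} with $f=uw$, use the $\mu$-independent $L^\infty$ bound on $w$ from Lemma \ref{wL}, and absorb the result into $-\mu p\int_\Omega u^{p+\alpha}$, with $\mu$ large when $\alpha=1$.

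The only differences are cosmetic. The paper splits with $\mu$-dependent weights $\frac{\mu p}{4}$ and $\frac{c_2}{\mu^p}$ and always uses the exponent $p+1$ on $\Delta v$, treating $\alpha>1$ via $u^{p+1}\le \delta u^{p+\alpha}+C_\delta$. You instead fix $\eta$ and use the exponent $\frac{p+\alpha}{\alpha}$, which reaches the same conclusion.
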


\begin{proof}
    Multiplying  the first equation of \eqref{1} by $u^{p-1}$ with $p>1$ and integrating by parts yields
    \begin{align} \label{Lp.1}
         \frac{d}{dt} \int_\Omega u^p &= -(p-1)p\int_\Omega u^{p-2} |\nabla u|^2 - \chi(p-1) \int_\Omega u^p \Delta v + p\int_\Omega u^{p-1}\kappa - p\int_\Omega u^p w - \mu p \int_\Omega u^{p+\alpha} \notag \\
         &\leq -\chi(p-1) \int_\Omega u^p \Delta v + p\int_\Omega u^{p-1}\kappa  - \mu p \int_\Omega u^{p+\alpha}  \qquad \text{for all }t\in (0,T_{\rm max}).
    \end{align}
    By using Young's inequality, we obtain 
    \begin{align}\label{Lp.2}
        \frac{p+1}{2} \int_\Omega u^p + p\int_\Omega u^{p-1}\kappa &\leq  \frac{p+1}{2} \int_\Omega u^p  + p \kappa_* \int_\Omega u^{p-1} \notag \\
        &\leq \frac{\mu p}{4} \int_\Omega u^{p+1}+c_1,
    \end{align}
    where $c_1>0$ and 
    \begin{align}\label{Lp.3}
        -\chi(p-1) \int_\Omega u^p \Delta v \leq \frac{\mu p}{4}\int_\Omega u^{p+1}+ \frac{c_2}{\mu^p} \int_\Omega |\Delta v|^{p+1}. 
    \end{align}
    $c_2= \frac{(p-1)^{p+1}}{p+1} \left ( \frac{4}{p}\right )^p$. Collecting \eqref{Lp.1}, \eqref{Lp.2}, and \eqref{Lp.3}, we obtain that
    \begin{align}\label{Lp.4}
         \frac{d}{dt} \int_\Omega u^p +  \frac{p+1}{2} \int_\Omega u^p \leq \frac{c_2}{\mu^p} \int_\Omega |\Delta v|^{p+1} +\frac{\mu p}{2}\int_\Omega u^{p+1} - \mu p \int_\Omega u^{p+\alpha} +c_1  \qquad \text{for all }t\in (0,T_{\rm max}).
    \end{align}
    Multiplying both sides of \eqref{Lp.4} by $e^{\frac{p+1}{2}t}$ and integrating from $t_0:= \min \left \{1, \frac{T_{\rm max}}{2} \right \}$ to $t$, we arrive at 
    \begin{align}\label{Lp.5}
        \int_\Omega u^p(\cdot,t) &\leq \frac{c_2}{\mu^p} \int_{t_0}^t e^{-\frac{(p+1)(t-s)}{2}} \int_\Omega |\Delta v(\cdot,s)|^{p+1} + \frac{\mu p}{2} \int_{t_0}^t e^{-\frac{(p+1)(t-s)}{2}} \int_\Omega u^{p+1}(\cdot,s)  \notag \\
        &\quad - \mu p  \int_{t_0}^t e^{-\frac{(p+1)(t-s)}{2}} \int_\Omega u^{p+\alpha}(\cdot,s) + \int_\Omega u^p(\cdot,t_0)+ \frac{2c_1}{p+1}. 
    \end{align}
    In view of Lemma \ref{l1} and Lemma \ref{wL}[\eqref{wL-2}], we can find $c_3=c_3(p,n,\Omega)>0$ such that  
    \begin{align}\label{Lp.6}
        \frac{c_2}{\mu^p} \int_{t_0}^t e^{-\frac{(p+1)(t-s)}{2}} \int_\Omega |\Delta v(\cdot,s)|^{p+1} &\leq  \frac{c_3}{\mu^p} \int_{t_0}^t e^{-\frac{(p+1)(t-s)}{2}} \int_\Omega u^{p+1}(\cdot,s)w^{p+1}(\cdot,s)  \notag \\
        &\quad+ \frac{c_3}{\mu^p} e^{-\frac{(p+1)(t-t_0)}{2}} \int_\Omega |\Delta v(\cdot,t_0)|^{p+1} \notag \\
        &\leq \frac{c_4}{\mu^p}\int_{t_0}^t e^{-\frac{(p+1)(t-s)}{2}} \int_\Omega u^{p+1}(\cdot,s)  \notag \\
        &\quad+ \frac{c_3}{\mu^p} e^{-\frac{(p+1)(t-t_0)}{2}} \int_\Omega |\Delta v(\cdot,t_0)|^{p+1},
    \end{align}
    where $c_4= c_3 \sup_{t\in (0,T_{\rm max})} \left \| w(\cdot,t)  \right \|^{p+1}_{L^\infty(\Omega)} $ is independent of $\mu $ if $\mu>1$. Combining \eqref{Lp.5} and \eqref{Lp.6}, we obtain 
    \begin{align}\label{Lp.7}
        \int_\Omega u^p(\cdot,t) &\leq \left ( \frac{\mu p}{2} +\frac{c_4}{\mu^p} \right )\int_{t_0}^t e^{-\frac{(p+1)(t-s)}{2}} \int_\Omega u^{p+1}(\cdot,s) -\mu p  \int_{t_0}^t e^{-\frac{(p+1)(t-s)}{2}} \int_\Omega u^{p+\alpha}(\cdot,s) +c_5,
    \end{align}
   {for all } $t\in (0,T_{\rm max})$,  where $c_5 =\frac{c_3}{\mu^p} e^{-\frac{(p+1)(t-t_0)}{2}} \int_\Omega |\Delta v(\cdot,t_0)|^{p+1} +\int_\Omega u^p(\cdot,t_0) +\frac{2c_1}{p+1} $. In case $\alpha=1$ and $3 \leq n \leq 5$, we choose $\mu_* := \max \left \{1, \left ( \frac{2c_4}{p} \right )^{\frac{1}{p+1}} \right \}$, then for any $\mu>\mu_*$ it follows that 
    \begin{align}\label{Lp.8}
         \int_\Omega u^p(\cdot,t) &\leq \left ( \frac{c_4}{\mu^p} - \frac{\mu p}{2} \right ) \int_{t_0}^t e^{-\frac{(p+1)(t-s)}{2}} \int_\Omega u^{p+1}(\cdot,s) +c_5 \notag\\
         &\leq c_5 \qquad \text{for all }t\in (0,T_{\rm max}).
    \end{align}
    In case $\alpha> \frac{n-2}{4}$ with $n \geq 6$, we find that $\alpha>1$. By applying Young's inequality to \eqref{Lp.7}, we can find $c_6>0$ such that 
    \begin{align*}
         \int_\Omega u^p(\cdot,t) &\leq c_6\qquad \text{for all }t\in (0,T_{\rm max}).
    \end{align*}
    This, together with \eqref{Lp.8} completes the proof of the lemma.    
\end{proof}

We are now ready to prove our first main result.
\begin{proof}[Proof of Theorem \ref{thm}]
    In case $3\leq n \leq 5$, $\alpha=1$ and $k =0$, from Lemma \ref{Lp}, there exists $\mu_0>0$ such that for any $\mu>\mu_0$, the following holds
    \begin{align} \label{prf.1}
        \int_\Omega u^{n+1}(\cdot,t) \leq c_1 \qquad \text{for all }t\in (0,T_{\rm max}),
    \end{align}
    where $c_1>0$.
    In case $n \geq 6$, $\alpha> \frac{n-2}{4}$ and $k=0$ or $n=2$, $\alpha=1$, and $k \in [0,1)$, \eqref{prf.1} holds for any $\mu>0$ thanks to Lemma \ref{Lp''} and Lemma \ref{Lp}. It is followed from Lemma \ref{wL}[\eqref{wL-2}] that 
    \begin{align}\label{prf.2}
         \left \| w(\cdot,t) \right \|_{L^{ \infty}(\Omega)} \leq c_2 \qquad \text{for all }t\in (0,T_{\rm max})
    \end{align}
    where $c_2>0$. This, together with \eqref{prf.1} implies that 
     \begin{align*} 
        \int_\Omega u^{n+1}(\cdot,t)w^{n+1}(\cdot,t) \leq c_3 \qquad \text{for all }t\in (0,T_{\rm max}),
    \end{align*}
    where $c_2=c_1 c_2^{n+1}$. This, in conjunction with Lemma \ref{C52.Para-Reg} implies that 
    \begin{align} \label{prf.3}
        \left \| v(\cdot,t) \right \|_{W^{1, \infty}(\Omega)} \leq c_4 \qquad \text{for all }t\in (0,T_{\rm max}),
    \end{align}
    where $c_4>0$. Using the estimate \eqref{Lp''.1}, we obtain
    \begin{align*}
        \frac{d}{dt}\int_\Omega u^p + \int_\Omega u^p &\leq -\frac{2(p-1)}{p} \int_\Omega |\nabla u^{\frac{p}{2}}|^2 + \frac{\chi^2p(p-1)}{2} \int_\Omega u^p |\nabla v|^2 +p\kappa_* \int_\Omega u^{p-1} \notag \\
        &\leq -\frac{2(p-1)}{p} \int_\Omega |\nabla u^{\frac{p}{2}}|^2 + \frac{\chi^2c_4^2p(p-1)}{2} \int_\Omega u^p +p\kappa_* \int_\Omega u^{p-1},
    \end{align*}
    for all $t\in (0,T_{\rm max})$. Now, applying standard Moser iteration procedure as well as established in Lemma A.1 in \cite{Winkler-2011}, we obtain that 
    \begin{align} \label{prf.4}
         \left \| u(\cdot,t) \right \|_{L^{ \infty}(\Omega)} \leq c_5 \qquad \text{for all }t\in (0,T_{\rm max}),
    \end{align}
    where $c_5>0$. Collecting \eqref{prf.2}, \eqref{prf.3}, and \eqref{prf.4}, it follows that 
    \begin{align*}
         \left \| u(\cdot,t) \right \|_{L^{ \infty}(\Omega)} +  \left \| v(\cdot,t) \right \|_{W^{1, \infty}(\Omega)} + \left \| w(\cdot,t) \right \|_{L^{ \infty}(\Omega)} \leq c_6 \qquad \text{for all }t\in (0,T_{\rm max}),
    \end{align*}
    where $c_6=c_2+c_4+c_5$. This, together with the extensibility property of solutions \eqref{local-1} implies that $T_{\rm max} = \infty$ and
    \begin{align*}
        \sup_{t>0} \left \{ \left \| u(\cdot,t) \right \|_{L^{ \infty}(\Omega)} +  \left \| v(\cdot,t) \right \|_{W^{1, \infty}(\Omega)} + \left \| w(\cdot,t) \right \|_{L^{ \infty}(\Omega)} \right \} <\infty,
    \end{align*}
    which completes the proof.
    
\end{proof}

\section{Global boundedness for the parabolic-elliptic-parabolic system } \label{S5}

In this section, we establish the global existence and boundedness of solutions to the parabolic--elliptic--parabolic system \eqref{1}. We begin with the $L^p$ boundedness result presented in the following lemma.

\begin{lemma} \label{Lp'}
    Let $\gamma=0$, $\alpha>\frac{n+2}{2}$ with $n \geq 2$, $k =0$, $\chi \in \mathbb{R}$ and $\mu>0$. Then for any $p>1$, there exists $C>0$ such that 
    \begin{align} \label{Lp'-1}
        \int_\Omega u^p(\cdot,t) \leq C \qquad \text{for all }t\in (0,T_{\rm max}).
    \end{align}
    and 
    \begin{align} \label{Lp'-2}
        \int_\Omega w^p(\cdot,t) \leq C \qquad \text{for all }t\in (0,T_{\rm max}).
    \end{align}
    
\end{lemma}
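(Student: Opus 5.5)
The plan is to run a coupled energy argument for $y(t):=\int_\Omega u^p+\int_\Omega w^q$, with $p$ large and $q$ chosen in terms of $p$. The elliptic structure of the second equation is used both to remove $\Delta v$ from the $u$-equation and to control $v$ in the $w$-equation.

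\emph{Step 1: the $u$-equation.} Test the first equation of \eqref{1} with $pu^{p-1}$. Since $\gamma=0$, the second equation gives $\Delta v=v-uw$, and $v\ge 0$. The chemotactic contribution is therefore
\[
-\chi(p-1)\int_\Omega u^p\Delta v=\chi(p-1)\int_\Omega u^p(uw-v)\le |\chi|(p-1)\int_\Omega u^{p+1}w
\]
when $\chi\ge0$. For $\chi<0$ one instead gets $|\chi|(p-1)\int_\Omega u^pv$, which is handled as in Step 2. By Young's inequality,
\[
\int_\Omega u^{p+1}w\le \varepsilon\int_\Omega u^{p+\alpha}+C_\varepsilon\int_\Omega w^{\frac{p+\alpha}{\alpha-1}},
\]
and the first term is absorbed by the damping $-\mu p\int_\Omega u^{p+\alpha}$. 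The remaining terms ($\kappa$, $-u$, $-uw$) are harmless.

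\emph{Step 2: the $w$-equation and elliptic control of $v$.} Apply Lemma \ref{w} with $p_0=1$, which is allowed by Lemma \ref{L1'}, and with exponent $q$. This gives a dissipative term $-c_1\int_\Omega w^{q+\frac2n}$. Instead of estimating $v$ directly, I would keep the source term in the form $q\int_\Omega vw^{q-1}$. Write $v=(I-\Delta)^{-1}(uw)$ with Neumann data and use the $L^r$-contractivity of $(I-\Delta)^{-1}$ (a comparison/duality argument) to obtain
\[
\int_\Omega vw^{q-1}\le \|uw\|_{L^s(\Omega)}\,\|w^{q-1}\|_{L^{s'}(\Omega)}.
\]
Hölder's and Young's inequalities then reduce this to a sum $\varepsilon\int_\Omega u^{p+\alpha}+C\int_\Omega w^{\beta}$. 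At the level of homogeneity this behaves like $\int_\Omega uw^q$, so the natural choice is
\[
\beta=\frac{q(p+\alpha)}{p+\alpha-1}.
\]
The same treatment applies to the term $\int_\Omega u^pv$ from the case $\chi<0$.

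\emph{Step 3: choice of exponents, which is the main obstacle.} Both $w$-powers must lie strictly below the dissipation exponent $q+\frac2n$:
\[
\frac{p+\alpha}{\alpha-1}<q+\frac2n \qquad\text{and}\qquad \frac{q(p+\alpha)}{p+\alpha-1}<q+\frac2n .
\]
The second condition is equivalent to $q<\frac{2(p+\alpha-1)}{n}$. Both can hold for large $p$ exactly when $\frac{1}{\alpha-1}<\frac{2}{n}$, that is, when $\alpha>\frac{n+2}{2}$, and this is where the hypothesis enters. Fix $q$ in the resulting window. After absorbing the $w$-powers into $\frac{c_1}{2}\int_\Omega w^{q+\frac2n}$, and using $\int_\Omega u^p\le\varepsilon\int_\Omega u^{p+\alpha}+C$, we arrive at $y'+y\le C$. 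Gronwall's inequality then gives \eqref{Lp'-1} for all large $p$, hence for all $p>1$ by Hölder's inequality together with Lemma \ref{L1'}. It also gives \eqref{Lp'-2} for the associated $q$; any other exponent follows either by Lemma \ref{C52.Para-Reg} or by running the argument again with $p$ enlarged.

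The delicate point is Step 2. Making the splitting of $\|uw\|_{L^s}\|w^{q-1}\|_{L^{s'}}$ land on exactly the exponents $p+\alpha$ and $\beta$ requires choosing $s$ carefully, because the elliptic estimate only transfers $L^s$ norms and no time-uniform bound for $v$ beyond $L^1$ is available. If the duality trick proves awkward, the fallback is the pointwise bound $\|v\|_{L^s}\le C\|uw\|_{L^s}$ followed by a three-factor Young inequality, which yields the same exponent count.
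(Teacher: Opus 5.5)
Your proposal is correct and follows essentially the paper's route: test the $u$-equation, substitute $\Delta v=v-uw$, absorb $\int_\Omega u^{p+1}w$ (or $\int_\Omega u^pv$) into the damping at the cost of $\int_\Omega w^{\frac{p+\alpha}{\alpha-1}}$, and close with the dissipation from Lemma \ref{w} (with $p_0=1$) together with the elliptic bound $\|v\|_{L^s(\Omega)}\le\|uw\|_{L^s(\Omega)}$. The paper differs only in bookkeeping: it uses Lemma \ref{w} in its stated form (producing $\int_\Omega v^r$, then controlled by testing the second equation), takes $q=\frac{p+\alpha}{\alpha-1}-\frac2n$ at the endpoint of your window with matched coefficients, and treats $\chi>0$ and $\chi\le0$ separately; its key condition \eqref{Lp'.4} amounts to your inequality $q<\frac{2(p+\alpha-1)}{n}$ at that $q$, i.e.\ to $\alpha>\frac{n+2}{2}$.
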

\begin{proof}
    Multiplying the first equation of \eqref{1} by $u^{p-1}$ with $p>1$ and integrating by parts yields
    \begin{align} \label{Lp'.1}
        \frac{1}{p}\frac{d}{dt}\int_\Omega u^p &= \int_\Omega u^{p-1} \left ( \Delta u - \chi \nabla (u \nabla v)+\kappa -u - uw- \mu u^{1+\alpha} \right ) \notag \\
        &=-(p-1) \int_\Omega u^{p-2}|\nabla u|^2-\frac{\chi(p-1) }{p}\int_\Omega u^p \Delta v + \int_\Omega \kappa u^{p-1}  \notag \\
        &\quad-\int_\Omega u^p - \int_\Omega u^p w -\mu \int_\Omega u^{p+\alpha}  \notag \\
        &\leq \frac{\chi(p-1) }{p}\int_\Omega u^{p+1} w -  \frac{\chi(p-1) }{p}\int_\Omega u^{p} v \notag \\
        &\quad+ \kappa_* \int_\Omega u^{p-1} - \int_\Omega u^p - \mu \int_\Omega u^{p+\alpha} \qquad \text{for all }t\in (0,T_{\rm max}).
    \end{align}
   \textbf{The case $\chi>0$.} In light of Young's inequality, there exists $c_1=c_1(p, \chi, \mu, \alpha)>0$ such that 
    \begin{align}\label{Lp'.2}
         \frac{\chi(p-1) }{p}\int_\Omega u^{p+1} w  &\leq \frac{\mu}{2} \int_\Omega u^{p+\alpha} + c_1 \int_\Omega w^{\frac{p+\alpha}{\alpha-1}}.
    \end{align}
    Let denote 
    \begin{align*}
        q= \frac{p+\alpha}{\alpha-1} - \frac{2}{n}
    \end{align*}
    and 
    \begin{align*}
        r= \frac{q+\frac{2}{n}}{\frac{2}{n}+1} = \frac{n(p+\alpha)}{(n+2)(\alpha-1)},
    \end{align*}
    then we see that $q>1$ since $\alpha> \frac{n+2}{2}$. In view of Lemma \ref{L1'} and Lemma \ref{w}, we can find $c_2>0$ and $c_3>0$ satisfying 
    \begin{align}\label{Lp'.3}
        c_2\frac{d}{dt}\int_\Omega w^q +c_2\int_\Omega w^q+2c_1 \int_\Omega w^{\frac{p+\alpha}{\alpha-1}} \leq c_3 \int_\Omega v^r + c_3\qquad \text{for all }t\in (0,T_{\rm max}).
    \end{align}
     Noting that the condition $\alpha>\frac{n+2}{2}$ implies that 
    \begin{align}\label{Lp'.4}
        \frac{r(p+\alpha)}{p+\alpha-r} < \frac{p+\alpha}{\alpha-1}.
    \end{align}
    Now, we test the second equation of \eqref{1} by $v^{r-1}$, use employ Young's inequality to obtain that 
    \begin{align}\label{Lp'.5}
        c_3 \int_\Omega v^r &=- c_3(r-1) \int_\Omega v^{r-2}|\nabla v|^2 + c_3\int_\Omega uw v^{r-1 } \notag \\
        &\leq c_3\int_\Omega uw v^{r-1 } \notag \\
        &\leq \frac{\mu}{2} \int_\Omega u^{p+\alpha}+c_4 \int_\Omega w^{\frac{r(p+\alpha)}{p+\alpha-r}} \notag \\
        &\leq \frac{\mu}{2} \int_\Omega u^{p+\alpha} +c_1 \int_\Omega w^{\frac{p+\alpha}{\alpha-1}} +c_5,
     \end{align} 
     where $c_4>0$ and $c_5>0$. By Young's inequality, there exists $c_6>0$ such that 
     \begin{align}\label{Lp'.6}
         \kappa_* \int_\Omega u^{p-1} \leq \frac{p-1}{p} \int_\Omega u^p+c_6.
     \end{align}
     Setting 
     \begin{align*}
          y(t) = \frac{1}{p}\int_\Omega u^p(\cdot,t) +c_2 \int_\Omega w^q(\cdot,t ) \qquad \text{for all }t\in (0,T_{\rm max}),
     \end{align*}
     and collecting \eqref{Lp'.1}, \eqref{Lp'.2}, \eqref{Lp'.3}, \eqref{Lp'.5} and \eqref{Lp'.6}, we arrive at 
     \begin{align*}
         y'(t) +y(t) \leq c_7  \qquad \text{for all }t\in (0,T_{\rm max}),
     \end{align*}
     where $c_7=c_3+c_5+c_6$. This, together with Gronwall inequality implies that 
     \begin{align}\label{Lp'.7}
        \frac{1}{p}\int_\Omega u^p(\cdot,t) +c_2 \int_\Omega w^q(\cdot,t ) \leq \max \left \{y(0),c_7 \right \}  \qquad \text{for all }t\in (0,T_{\rm max}).
     \end{align}
    \textbf{The case $\chi \leq 0$. } Employing Young's inequality, there exists $c_8>0$ such that 
    \begin{align}\label{Lp'.8}
        -\frac{\chi(p-1)}{p}\int_\Omega u^p v &\leq |\chi| \int_\Omega u^p v \notag \\
        &\leq \frac{\mu}{2} \int_\Omega u^{p+\alpha} + c_8 \int_\Omega v^{\frac{p+\alpha}{\alpha}}.
    \end{align}
    Setting 
    \begin{align*}
        l = \frac{p+\alpha}{\alpha} \qquad \text{and } \qquad s= \frac{l(p+\alpha)}{p+\alpha -l} - \frac{2}{n},
    \end{align*}
    then by using similar argument as in \eqref{Lp'.5}, we obtain
    \begin{align}\label{Lp'.9}
        (c_8+1)\int_\Omega v^l \leq \frac{\mu}{2} \int_\Omega u^{p+\alpha} +c_9 \int_\Omega w^{\frac{l(p+\alpha)}{p+\alpha-l}}.
    \end{align}
    Thanks to Lemma \ref{L1'}[\eqref{L1'.2}] and Lemma \ref{w}, there exist positive constants $c_{10}, c_{11}$ and $c_{12}$ such that 
    \begin{align}\label{Lp'.10}
        \frac{d}{dt}\int_\Omega w^s + \int_\Omega w^s + c_{10}\int_\Omega w^{s+\frac{2}{n}} \leq c_{11} \int_\Omega v^{\frac{n}{n+2}\left (s+\frac{2}{n} \right )}+c_{12} \qquad \text{for all }t\in (0,T_{\rm max}).
    \end{align}
    The condition $\alpha> \frac{n+2}{2}$ implies that 
    \begin{align*}
        \frac{n}{n+2}\left (s+\frac{2}{n} \right ) = \frac{n+2}{n} \cdot \frac{l(p+\alpha)}{p+\alpha-l} < l.
    \end{align*}
    This, together with \eqref{Lp'.10} and Young's inequality entails that 
    \begin{align}\label{Lp'.11}
        \lambda \frac{d}{dt}\int_\Omega w^s + \lambda \int_\Omega w^s+ c_9 \int_\Omega w^{\frac{l(p+\alpha)}{p+\alpha-l}} &\leq c_{11} \lambda \int_\Omega v^{\frac{n+2}{n} \cdot \frac{l(p+\alpha)}{p+\alpha-l}} +\lambda c_{12} \notag \\
        &\leq \int_\Omega v^l +c_{13}\qquad \text{for all }t\in (0,T_{\rm max}),
    \end{align}
    where $\lambda= \frac{c_9}{c_{10}}$ and $c_{13}>0$. Now, collecting \eqref{Lp'.1}, \eqref{Lp'.6}, \eqref{Lp'.8}, \eqref{Lp'.9} and \eqref{Lp'.11}, we obtain 
    \begin{align*}
        \frac{d}{dt} \left \{ \frac{1}{p}\int_\Omega u^p + \lambda \int_\Omega w^s \right \}+ \frac{1}{p}\int_\Omega u^p + \lambda \int_\Omega w^s \leq c_{14} \qquad \text{for all }t\in (0,T_{\rm max}),
    \end{align*}
    where $c_{14}= c_6+c_{13}$. Applying Gronwall inequality to this leads to
    \begin{align}\label{Lp'.12}
         \frac{1}{p}\int_\Omega u^p(\cdot,t) + \lambda \int_\Omega w^s(\cdot,t) \leq \max \left \{ c_{14}, \frac{1}{p}\int_\Omega u_0^p + \lambda \int_\Omega v_0^l \right \}\qquad \text{for all }t\in (0,T_{\rm max}).
    \end{align}
    Combining \eqref{Lp'.7} and \eqref{Lp'.12} and noting that  $p$ is arbitrary,  we deduce \eqref{Lp'-1} and \eqref{Lp'-2}. The proof is now complete.
\end{proof}
We are now in position to prove our second main result.
\begin{proof}[Proof of Theorem \ref{thm2}]
    From Lemma \eqref{Lp'}, it follows that 
    \begin{align*}
        \sup_{t\in (0,T_{\rm max})} \left \| u(\cdot,t) w(\cdot,t) \right \|_{L^p(\Omega)} < \infty,
    \end{align*}
    for some $p>n$. By standard elliptic regularity theory, we deduce that 
    \begin{align*}
        \sup_{t\in (0,T_{\rm max})} \left \| v(\cdot,t)  \right \|_{W^{2,p}(\Omega)} < \infty,
    \end{align*}
    which together with Sobolev embedding theorem entails that 
      \begin{align} \label{prfthm2.1}
        \sup_{t\in (0,T_{\rm max})} \left \| v(\cdot,t)  \right \|_{W^{1,\infty}(\Omega)} < \infty.
    \end{align}
    Employing standard maximum principle to the third equation of \eqref{1} yields that 
     \begin{align} \label{prfthm2.2}
        \sup_{t\in (0,T_{\rm max})} \left \| w(\cdot,t)  \right \|_{L^{\infty}(\Omega)} < \infty.
    \end{align}
   Thanks to \eqref{prfthm2.1} and \eqref{prfthm2.2}, we can perform a standard Moser iteration procedure as well-established in Lemma A.1 in \cite{Winkler-2011} to deduce that 
   \begin{align}\label{prfthm2.3}
       \sup_{t\in (0,T_{\rm max})} \left \| u(\cdot,t)  \right \|_{L^{\infty}(\Omega)} < \infty.
   \end{align}
   Finally, combining \eqref{prfthm2.1}, \eqref{prfthm2.2}, \eqref{prfthm2.3} and the extensibility property of solutions \eqref{local-1} yields that $T_{\rm max}=\infty$ and 
   \begin{align*}
       \sup_{t>0} \left \{\left \| u(\cdot,t)  \right \|_{L^{\infty}(\Omega)}+\left \| v(\cdot,t)  \right \|_{W^{1,\infty}(\Omega)}+  \left \| w(\cdot,t)  \right \|_{L^{\infty}(\Omega)}\right \}
<\infty,   \end{align*}
which completes the proof.
\end{proof}

     \paragraph{Data Availability}
 Data sharing not applicable to this article as no datasets were generated or analyzed during
the current study.
\section*{Declarations}
\paragraph{Conflict of Interest} The authors declare that they have no conflict of interest.
\paragraph{Acknowledgments} Minh Le was supported by the Hangzhou Postdoctoral Research Grant.

\end{document}